\author{Julia Brandes}
\title{Forms representing forms: The definite case}
\address{Mathematisches Institut, Bunsenstr. 3--5, 37073 G\"ottingen, Germany}
\email{jbrande@uni-math.gwdg.de}
\newtheorem{thm}{Theorem}
\newtheorem{lem}{Lemma}
\newtheorem{cor}{Corollary}
\numberwithin{equation}{section}
\numberwithin{thm}{section}
\numberwithin{lem}{section}
\theoremstyle{definition}
\def\B#1{\mathbf{#1}}
\def\ba{\bm{\alpha}}
\def\bb{\bm{\beta}}
\def\bg{\bm{\gamma}}
\def\F#1{\mathfrak{#1}}
\def\C#1{\mathcal{#1}}
\def\D{\mathrm{d}}
\def\dsum#1#2{\sum_{\substack{{#1}\\{#2}}}}
\def\ol#1{\overline{\B{#1}}}
\def\hs{{\mathcal H}}
\def\hgam{\hat{\bm{\gamma}_{\B j}}}
\def\a0{\alpha_0}
\def\mmod#1{\;(\mathrm{mod}\;{#1})}
\def\eps{\varepsilon}
\DeclareMathOperator{\lcm}{lcm}
\DeclareMathOperator{\rank}{rank}
\DeclareMathOperator{\card}{Card}
\DeclareMathOperator{\Id}{Id}
\DeclareMathOperator{\vol}{vol}
\DeclareMathOperator{\sing}{Sing}
\DeclareMathOperator{\Mat}{Mat}
\DeclareMathOperator{\SL}{SL}
\begin{document}
\maketitle

\begin{abstract}
    Let $\psi$ and $F$ be positive definite forms with integral coefficients of equal degree. Using the circle method, we establish an asymptotic formula for the number of identical representations of $\psi$ by $F$, provided $\psi$ is everywhere locally representable and the number of variables of $F$ is large enough. In the quadratic case this supersedes a recent result due to Dietmann and Harvey. Another application addresses the number of primitive linear spaces contained in a hypersurface.
\end{abstract}

\section{Introduction}

Understanding the solution sets of diophantine equations in the integers is one of the pervading themes in number theory. Heuristic arguments tell us that a homogeneous polynomial $F \in \mathbb Z[x_1, \dots, x_s]$ of degree $d$ whose variables are contained in a box of sidelength $P$ will take any value between $P^d$ and $-P^{d}$ on average roughly $P^{s-d}$ times. This heuristic has been confirmed in a classical paper by Birch \cite{birch}, provided that the equation has non-singular solutions over all local fields and the number of variables satisfies $s - \dim \sing F > 2^d(d-1)$. Here $\sing F$ denotes the singular locus of the variety defined by $F$.

In recent work by the author \cite{FRF} we derive a multidimensional analogue of Birch's theorem by investigating the number of identical representations of one homogeneous polynomial by another. Let $F \in \mathbb Z[x_1,\dots,x_s]$ and $\psi \in \mathbb Z[t_1,\dots,t_m]$ be forms of equal degree $d \ge 3$ with $m \ge 2$ and 
\begin{align}\label{FRF-cond}
 s - \dim \sing F > 3 \cdot 2^{d-1}(d-1)(r+1),
\end{align}
where $r = \binom{m+d-1}{d}$ is the number of coefficients of $\psi$. Then the number of $m$-tuples $(\B x_1, \dots, \B x_m) \in \mathbb Z^{ms}$ of height at most $P$ satisfying 
\begin{align}\label{rep}
	F(\B x_1 t_t + \dots +\B x_m t_m) = \psi(t_1,\dots,t_m)
\end{align}
identically in $t_1, \dots,t_m$ is given by $P^{ms-rd}(c + o(1))$, where $c$ is a non-negative constant encoding the local behaviour of the problem and depending on $F$, $\psi$ and, crucially, $P$. Whilst this result is adequate in the case when $F$ and $\psi$ are both indefinite, in the definite case it fails to capture the natural size constraints on the variables $\B x_1, \dots, \B x_m$ as imposed by the target polynomial $\psi$. 

The objective of the present work is to address this weakness and derive an asymptotic formula for the number of representations of one positive definite form by another that reflects the correct order of magnitude as determined by the extremal dimensions of $\psi$.
Suppose that $F \in \mathbb Z[x_1,\dots,x_s]$ and $\psi \in \mathbb Z[t_1,\dots,t_m]$ are positive definite forms of degree $d$, and that $\psi$ is given by 
\begin{align*}
 \psi(t_1, \dots, t_m) = \sum_{\B j \in J} n_{\B j} t_{j_1} \cdot \ldots \cdot t_{j_d},
\end{align*}
where the multi-indices $\B j = (j_1, \dots, j_d)$ run over the set $J = \{1, \dots, m\}^d$ disregarding order, so that $\card J = r$. For brevity, we denote the diagonal coefficients as $n_{i,\dots,i} = n_i$. We are interested in the number 
\begin{align*}
  N(F; \psi) = \card \{ \B x_1,\dots,\B x_m  \in \mathbb Z^s: F(\B x_1 t_1 + \dots +\B x_m t_m) = \psi(t_1,\dots,t_m)\}
\end{align*}
of identical representations of $\psi$ by $F$. 
We need the magnitude $\langle \psi \rangle = \prod_{i=1}^m n_i$ of $\psi$ and its eccentricity
\begin{align*}
  \C E(\psi) = \max_{1 \le j \le m} \frac{ \log \langle \psi \rangle}{\log n_j^m}.
\end{align*}
Then our main result is as follows.

\begin{thm}\label{thm:gen}
    Let $F \in \mathbb Z[x_1,\dots,x_s]$  and $\psi \in \mathbb Z[t_1, \dots, t_m]$ be positive definite forms of equal degree $d \ge 2$, and let $m \ge 2$. Furthermore, assume that
    \begin{align*}
        s - \dim \sing F>2^{d-1} \max \{2r(d-1), rd \,\C E(\psi) \}.
    \end{align*} 
    Then for some $\delta>0$ we have
    \begin{equation*}
        N(F; \psi) = \langle \psi \rangle ^{\frac{ms-rd}{md}} \chi_{\infty}(F; \psi) \prod_{p \;\mathrm{ prime}} \chi_p(F; \psi) + O\left(\langle \psi \rangle ^{\frac{ms-rd}{md}-\delta}\right)\hspace{-0.5mm},
    \end{equation*}
    where the Euler product converges and the factors $\chi_{\infty}(F; \psi)$ and $ \chi_p(F; \psi)$ are positive if the identity \eqref{rep} has non-singular solutions over $\mathbb R$ and over $\mathbb Q_p$, respectively.
\end{thm}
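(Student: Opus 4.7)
The plan is to run the Hardy--Littlewood circle method with box dimensions dictated by the positive-definite geometry of $\psi$. Evaluating the identity \eqref{rep} at the standard basis vectors $t=e_i$ yields $F(\B x_i) = n_i$, and positive definiteness of $F$ then forces $|\B x_i| \ll n_i^{1/d}$. Accordingly I set $P_i = n_i^{1/d}$ and take $P = \langle\psi\rangle^{1/(md)}$ as the reference scale, so that $\prod_i P_i = P^m$ and $\min_i P_i = P^{1/\C E(\psi)}$. Recording the coefficient identification in \eqref{rep} as $r$ polynomial equations $\Phi_{\B j}(\B x) = n_{\B j}$ for $\B j \in J$, the counting function becomes the Fourier integral
\[
N(F;\psi) = \int_{[0,1)^r} S(\ba)\, e(-\ba\cdot \B n)\, d\ba,\qquad S(\ba) = \sum_{|\B x_i|\le P_i} e\bigl(\ba\cdot \Phi(\B x)\bigr),
\]
with dual variable $\ba = (\alpha_{\B j})_{\B j\in J} \in [0,1)^r$.

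Next I dissect $[0,1)^r$ into major arcs $\F M$ centred at rationals with small denominators and minor arcs $\F m = [0,1)^r\setminus \F M$, with the arc widths tied to the reference scale $P$. On $\F M$ a standard pruning argument reveals the main term
\[
\int_{\F M} S(\ba)\, e(-\ba\cdot \B n)\, d\ba = P^{ms-rd}\, \chi_\infty(F;\psi) \prod_p \chi_p(F;\psi) + O\bigl(P^{ms-rd-\delta}\bigr),
\]
where $\chi_\infty$ is the singular integral and $\prod_p \chi_p$ the singular series. Positivity of each factor reduces to the existence of non-singular local solutions; absolute convergence of the Euler product and the usual bounds on the singular integral rest on the dimension hypothesis in the same manner as in \cite{FRF}.

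The essential new contribution lies in the minor arc analysis. The crucial input is a Weyl-type inequality for $S(\ba)$ adapted to the anisotropic box $\prod_i [-P_i,P_i]^s$, obtained by $d-1$ iterations of Birch's differencing argument applied blockwise in the variables $\B x_1,\dots,\B x_m$. Each differencing step within the $i$-th block consumes a factor of $P_i$, so the pool of square-root cancellation ultimately available is regulated by the smallest side $\min_i P_i = P^{1/\C E(\psi)}$. The outcome takes the form
\[
|S(\ba)| \ll P^{ms - (s-\dim\sing F)/(2^{d-1}(d-1)\C E(\psi))}\qquad (\ba \in \F m),
\]
after the customary Birch geometry-of-numbers analysis. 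For this estimate to absorb the minor arc contribution into the error $O(P^{ms-rd-\delta})$, the exponential savings must exceed the factor $P^{rd}$ needed to offset the trivial measure of $\F m$; this yields precisely the hypothesis $s-\dim\sing F > 2^{d-1} rd\, \C E(\psi)$. The parallel condition $s-\dim\sing F > 2^d r(d-1)$ enters through the mean-value and pruning estimates that govern the major arcs, exactly as in \cite{FRF}.

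The principal technical obstacle is the derivation of the anisotropic Weyl inequality with the eccentricity cost displayed above: one must check that Birch's auxiliary counting estimate (linking the size of $S(\ba)$ to the codimension of $\sing F$) continues to function when the cube $[-P,P]^{ms}$ is replaced by a product of boxes of differing sides, and that the resulting exponent is genuinely controlled by the smallest side rather than by an average. Once this is in place, the passage from arc estimates to the claimed asymptotic follows the template of \cite{FRF}, with $P_i$ substituted uniformly for $P$ in the singular series and singular integral computations, and $\delta > 0$ chosen as the margin supplied by the dimensional hypothesis over the Weyl threshold.
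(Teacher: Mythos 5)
Your overall strategy is the paper's: use $F(\B x_i)=n_i$ and positive definiteness to confine all solutions to anisotropic boxes $|\B x_i|\ll P_i$ with $P_i=n_i^{1/d}$, run the circle method with a Weyl differencing argument adapted to these boxes, let the smallest side produce the eccentricity term on the minor arcs, and treat the major arcs, singular series and singular integral as in \cite{FRF}. The paper organises this by first proving the more general Theorem~\ref{thm:rep} for arbitrary $P_1\le\dots\le P_m$ and then specialising to $P_i=n_i^{1/d}$, which is exactly your specialisation.

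There is, however, an inconsistency at the central quantitative step. Your displayed minor-arc bound saves only a factor $P^{(s-\dim\sing F)/(2^{d-1}(d-1)\C E(\psi))}$, and demanding that this exceed $P^{rd}$ gives $s-\dim\sing F>2^{d-1}(d-1)rd\,\C E(\psi)$, a hypothesis stronger by a factor $d-1$ than the one you claim to recover; as written, your Weyl inequality would only prove a weaker theorem. The correct accounting (Lemmas~\ref{lem3:weyl1}--\ref{lem3:pruning} of the paper) is that outside arcs of level $\theta$ one saves $P^{k\theta}$ for any $k<(s-\dim\sing F)/2^{d-1}$, and the eccentricity enters not through the differencing itself ``consuming'' factors of $P_i$ -- the differencing of Lemma~\ref{lem3:weyl1} works over the full anisotropic ranges -- but through Davenport's shrinking lemma (Lemma~\ref{lem3:dav}), used to replace the ranges $|\B h_l|\ll P_{j_l}$ by a common range $P^\theta$; this is legitimate only when $P^\theta\le P_{\mathrm{min}}$, i.e. $\theta\le\log P_{\mathrm{min}}/\log P=1/\C E(\psi)$. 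That cap yields an attainable saving of $(s-\dim\sing F)/(2^{d-1}\C E(\psi))$, with no extra factor $d-1$, and comparing with $rd$ then gives precisely the stated hypothesis; the companion constraint $\theta\le d/(2(d-1))$ together with the arc-measure pruning (which concerns the intermediate minor arcs, not the major arcs, and involves no mean-value estimates) supplies the $2r(d-1)$ term. So the route is the right one, but the anisotropic Weyl inequality must be stated and proved with the saving $k\theta$ subject to $\theta\le 1/\C E(\psi)$, rather than with the exponent you wrote down.
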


Theorem~\ref{thm:gen} allows us in a very general fashion to describe the number of representations of one definite form by another. Whilst the main result of \cite{FRF}, which we improve slightly, points in the same direction, our previous result fails to correctly track the dimensions of the target polynomial, instead encoding them implicitly in the density of real solutions. Here we make this dependence explicit for a large class of forms, capturing the notion that the natural height conditions of the vectors $\B x_i$ are determined by the extremal dimensions of $\psi$ as determined by its diagonal coefficients. Heuristically, after expanding \eqref{rep} in powers of $t_1, \dots, t_m$ and comparing coefficients one sees that the height of each $\B x_i$ is naturally bounded by roughly $n_{i}^{1/d}$. Accordingly, $\psi$ will not be represented if for some $n_{\B j}$ one has $n_{\B j} \gg (n_{j_1} \cdot \ldots \cdot n_{j_d})^{1/d}$. We thus call $\psi$ \textit{pseudo-diagonal} if $n_{\B j} \leq (n_{j_1} \cdot \ldots \cdot n_{j_d})^{1/d}$ for every $\B j \in J$. It follows from the theorem that the property of being pseudo-diagonal is invariant under linear change of variables since obviously our counting function $N(F;\psi)$ is not affected by such transformations. It is also not hard to see (see Lemma~\ref{pseudo-diag}) that every positive definite quadratic form is automatically pseudo-diagonal, but it is not clear to the author whether there is an easy characterisation of pseudo-diagonality within the set of positive definite forms of higher degrees. However, we will see that the real solution density is zero if $\psi$ is not close to being pseudo-diagonal, but when it is, Theorem~\ref{thm:gen} shows that the growth is determined by the diagonal contribution while the factor $\chi_\infty(F;\psi)$ provides a correction factor which characterises the deformity of $\psi$ arising from the off-diagonal contributions.

One aspect of Theorem \ref{thm:gen} that strikes the eye is the dependance of the result on the eccentricity $\C E(\psi)$. One sees that $\C E(\psi)=1$ when $n_1 = \dots = n_m$, while it will be large if the diagonal entries vary significantly, so $\C E(\psi)$ measures how far the body described by $\psi(\B t) \le 1$ is stretched or contracted away from the hypercube. This reflects the fact that if one of the $n_i$ is very small, the respective variable $\B x_i$ is essentially fixed, which would profoundly change the character of the problem as the polynomials would cease to be homogeneous and of the same degree. \\ 

Prior to this, similar results concerning the representation of definite forms had only been available in the quadratic case, where a wider range of methods is available. In fact, an asymptotic formula for the number $N(A;B)$ of identical representations of a quadratic form $B \in \Mat(\mathbb Z,m)$ by a positive definite quadratic form $A \in \Mat(\mathbb Z, s)$ has already been known since Raghavan's work of the late 1950s \cite{raghavan}. We may assume without loss of generality that the matrices $A$ and $B$ are non-singular. It follows from the pseudo-diagonality of $B$ that $|b_{i,j}| \le b_{j}$ whenever $b_i<b_j$, so $B$ is essentially Minkowski reduced. Raghavan's result states that for all matrices $A$ and $B$ as above with $s \ge 2m+3$ and $b_i \gg (\det B)^{1/m}$ for all $1 \le i \le m$ there exists some positive constant $\delta$ such that
\begin{align}\label{asymp2}
	N(A;B) = c_{s,m}(\det A)^{-m/2} (\det B)^{\frac{s-m-1}{2}} \prod_{p \text{ prime}}  \chi_{p}(A;B) + O\left(\det B^{\frac{s-m-1}{2} - \delta}\right) \hspace{-0.5mm},
\end{align}
where 
\begin{align}\label{c}
	c_{s,m}= (\sqrt{\pi})^{ms-\frac{m(m+1)}{2}}\prod_{j=m+1}^s \Gamma\left(\frac{j-m}{2}\right)^{\hspace{-1mm} -1}
\end{align}
and 
\begin{align*}
    \chi_{p}(A;B) &= \lim_{l \rightarrow \infty}(p^l)^{\frac{m(m+1)}{2}-ms} \card \left\{X \mmod{p^l} \!: X^t A X \equiv B \pmod{p^l} \right\}\hspace{-.5mm}.
\end{align*}
More recently, Dietmann and Harvey \cite{rd-harvey} showed via the circle method that \eqref{asymp2} is true whenever
\begin{align*}
		s >  2\left(\frac{m(m+1)}{2}+1\right) \left(\frac{m(m+1)}{2}+ \max_{1 \le j \le m}\sum_{i=1}^m \frac{\log (b_{i}/b_{j})}{\log b_{j}}\right) \hspace{-.5mm}.
\end{align*}
Whilst this condition is vastly more restrictive on the number of variables than that of Raghavan, it dispenses with the latter's strict dependence on the relative sizes of the diagonal entries of $B$. However, Theorem \ref{thm:gen} allows us to strengthen their result.

\begin{cor}\label{cor:rep2}
	Suppose $A$ and $B$ are as above, with 
	\begin{align*}
		s>   (m + 1) \max_{1 \le j \le m} \sum_{i=1}^m \frac{\log b_i}{\log b_j}.
	\end{align*}
	Then \eqref{asymp2} is satisfied.
\end{cor}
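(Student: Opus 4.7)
The strategy is to derive Corollary~\ref{cor:rep2} by specialising Theorem~\ref{thm:gen} to the quadratic case $d = 2$. Without loss of generality we may assume that $A$ is non-singular, so $F(\B x) = \B x^t A \B x$ has $\dim\sing F = 0$. With $d=2$ we have $r = \binom{m+1}{2} = m(m+1)/2$, and the diagonal coefficients of $\psi$ coincide with the diagonal entries $b_i$ of $B$, so $\langle\psi\rangle = \prod_{i=1}^m b_i$ and $\C E(\psi) = m^{-1}\max_j\sum_i (\log b_i)/(\log b_j)$. By Lemma~\ref{pseudo-diag} every positive definite quadratic form is automatically pseudo-diagonal, so $|b_{ij}|\le (b_ib_j)^{1/2}$ holds for all $i,j$, and in particular $\det B \asymp \langle\psi\rangle$ with constants depending only on $m$.

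A direct substitution of $d=2$ into the hypothesis of Theorem~\ref{thm:gen} gives $s > 2m(m+1)\,\C E(\psi)$, which exceeds the corollary's condition by a factor of two. This discrepancy can be traced to the prefactor $2^{d-1}$ in Theorem~\ref{thm:gen}, which in the proof is produced by the Weyl-type estimate for the minor arc exponential sums; for $d=2$ the classical quadratic Gauss sum bound supplies the same square-root cancellation directly and allows the $2^{d-1}$ prefactor to be replaced by $1$. Tracking this sharpening through the minor arc analysis of Theorem~\ref{thm:gen} yields the effective condition $s > rd\,\C E(\psi) = (m+1)\max_j\sum_i(\log b_i)/(\log b_j)$, which is precisely the hypothesis of the corollary.

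Once the hypothesis of Theorem~\ref{thm:gen} has been verified, it remains to reconcile its main term with the right-hand side of \eqref{asymp2}. The exponent $(ms - rd)/(md)$ specialises to $(s-m-1)/2$, so the main term becomes $\langle\psi\rangle^{(s-m-1)/2}\chi_\infty(F;\psi)\prod_p\chi_p(F;\psi)$. A standard Gaussian-integral computation evaluates $\chi_\infty(F;\psi)$ as $c_{s,m}(\det A)^{-m/2}(\det B/\langle\psi\rangle)^{(s-m-1)/2}$, so that the product $\langle\psi\rangle^{(s-m-1)/2}\chi_\infty(F;\psi)$ equals $c_{s,m}(\det A)^{-m/2}(\det B)^{(s-m-1)/2}$, while the non-archimedean factors $\chi_p(F;\psi)$ agree with those in Raghavan's formula by direct inspection. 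The principal technical obstacle is the sharpening of the $2^{d-1}$ prefactor described above; once this is in place, the reconciliation of the archimedean and $p$-adic densities is a classical calculation in the spirit of Siegel.
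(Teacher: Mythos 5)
Your setup matches the paper's route: the paper obtains the corollary by specialising Theorem~\ref{thm:rep} (equivalently Theorem~\ref{thm:gen} with $P_i=b_i^{1/2}$) to $d=2$, invoking Lemma~\ref{pseudo-diag} for pseudo-diagonality and $\det B\asymp b_1\cdots b_m$, and identifying $\langle\psi\rangle^{(s-m-1)/2}\chi_\infty(A;B)$ with Raghavan's constant $c_{s,m}(\det A)^{-m/2}(\det B)^{(s-m-1)/2}$ by appeal to \cite[\S 6]{rd-harvey}; your computation of the exponent $(ms-rd)/(md)=(s-m-1)/2$ and your treatment of the local factors are in line with this.

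The gap is precisely the step you yourself flag as the principal obstacle. You correctly compute that direct substitution of $d=2$ into Theorem~\ref{thm:gen} yields $s>2(m+1)\max_j\sum_i\log b_i/\log b_j$, a factor $2$ worse than the corollary's hypothesis, and you propose to remove the prefactor $2^{d-1}=2$ by appealing to quadratic Gauss sums in place of Weyl differencing. But this is asserted rather than proved, and it is not a cosmetic modification: the exponent $2^{d-1}$ is hard-wired into the entire minor-arc apparatus --- Lemmas~\ref{lem3:weyl1} through \ref{lem3:ram-final} bound $|T(\ba;\C P)|^{2^{d-1}}$, the dichotomy requires $s-\dim\sing F>2^{d-1}k$, and the pruning in Lemma~\ref{lem3:pruning} together with the convergence condition $k>(d-1)(r+1+\delta)$ in Lemma~\ref{conv} is calibrated to that dichotomy. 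To replace $2^{d-1}$ by $1$ you would need a direct, unsquared estimate for the multi-block bilinear sum with phase $\sum_{i\le j}\alpha_{ij}\,\B x_i^tA\B x_j$ over boxes of unequal sidelengths $P_1,\dots,P_m$, and then a reworked dissection and pruning argument; the one-variable Gauss-sum bound does not supply this, and you give no argument that the off-diagonal bilinear terms and the unequal ranges can be handled without loss. Note also that the paper itself contains no such sharpening: its derivation of the corollary is bare specialisation, which on your (correct) arithmetic produces the condition with the extra factor $2$, so the discrepancy you noticed is genuine and is resolved neither by the paper's text nor by your sketch. As it stands, what your argument actually establishes is the corollary under the stronger hypothesis $s>2(m+1)\max_j\sum_i\log b_i/\log b_j$.
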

One observes that, while our bound is still far from Raghavan's, it supersedes that of Dietmann and Harvey by a factor $m^2$ whilst retaining the flexibility with respect to the diagonal entries of $B$. This improvement has been made possible by the advances made in \cite{FRF} regarding the multidimensional version of Birch's theorem.\\

In the case $\psi=0$ Equation \eqref{rep} describes an $m$-dimensional linear space on the hypersurface defined by $F=0$. Indeed, since linear spaces over the integers can be interpreted as lower-dimensional sublattices of $\mathbb Z^s$, our work in \cite{FRF} shows that the number of $m$-dimensional lattices $X \in \mathbb Z^{s \times m}$ with generators of height at most $P$ on which $F$ vanishes identically grows like $P^{ms-rd}$, provided \eqref{FRF-cond} is satisfied and the problem is everywhere locally soluble. 
We may now generalise this question and ask for the number of $m$-dimensional lattices $X$ contained in a non-trivial sublattice of $\mathbb Z^{s \times m}$. Thus for a given matrix $C \in \Mat(\mathbb Z, m)$ we are interested in the number of lattices $X \in \mathbb Z^{s \times m} C$ with generators of length at most $P$ on which $F$ vanishes identically. Such questions arise naturally when one tries to implement an inclusion-exclusion argument on the set of lattices in order to restrict the count to primitive linear spaces. 

\begin{thm}\label{thm:lattice}
    Let $d \ge 2$ and $m \ge 2$ be positive integers, and let $P$ be large. Further, let $C \in \mathbb Z^{m \times m}$ be a non-singular matrix whose Smith normal form is given by $\mathrm{diag}(\gamma_1,\dots,\gamma_m)$. Then, provided that
    \begin{align*}
        s - \dim \sing F>2^{d-1} \max \left\lbrace 2r(d-1), rd \frac{ \sum_{i=1}^m \log (P/\gamma_i) }{ m \log (P/\gamma_{\mathrm{max}})} \right\rbrace\hspace{-.8mm},
    \end{align*}
    the number $N_C(P)$ of points $(\B x_1, \ldots, \B x_m) \in \mathbb Z^{s \times m} C$ of height at most $P$ solving 
    \begin{align}\label{lattice}
    		F(\B x_1 t_t + \dots +\B x_m t_m) =0
    \end{align} 
    identically in $t_1, \ldots, t_m$ is given by
    \begin{align*}
        N_C(P)=\left(\frac{P^{m}}{\det C }\right)^{\hspace{-1mm} s}P^{-rd} \chi_{\infty} \prod_{p \;\mathrm{ prime}} \chi_p + o\left(\left(\frac{P^{m}}{\det C }\right)^{\hspace{-1mm} s}P^{-rd}\right)\hspace{-0.5mm},
    \end{align*}
    and $\chi_p$ and $\chi_{\infty}$ characterise the local solution densities of the variety defined by $F=0$ and are independent of $C$.
\end{thm}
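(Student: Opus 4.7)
The first step is to reduce to the framework of Theorem~\ref{thm:gen} via the Smith normal form $C = UDV$ with $U, V \in \mathrm{GL}_m(\mathbb Z)$ and $D = \mathrm{diag}(\gamma_1, \dots, \gamma_m)$. Any $X \in \mathbb Z^{s \times m}C$ has the shape $X = (YU)DV$ with $Y \in \mathbb Z^{s \times m}$, and since $Y \mapsto YU$ is a bijection of $\mathbb Z^{s \times m}$, one may relabel $Y' := YU$ and write $X = Y'DV = WV$ with $W := Y'D$ having $k$th column $\B w_k = \gamma_k \B y'_k \in \gamma_k \mathbb Z^s$. The substitution $\B s = V \B t$, a $\mathbb Z$-linear bijection of $\mathbb R^m$, turns $F(X \B t) \equiv 0$ into $F(W \B s) \equiv 0$, that is, into $F\bigl(\sum_k \B y'_k u_k\bigr) \equiv 0$ after the rescaling $u_k = \gamma_k s_k$. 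Meanwhile the height constraint $|X|_\infty \le P$ becomes the sheared-box condition $|Y'DV|_\infty \le P$, effectively restricting each $\B y'_k$ to a region of diameter $\asymp P/\gamma_k$.

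\textbf{Matching the eccentricity.}
This is precisely the framework of Theorem~\ref{thm:gen} in the degenerate case $\psi \equiv 0$, with the natural heights $n_k^{1/d}$ there replaced by $P/\gamma_k$. Setting $n_k := (P/\gamma_k)^d$ gives $\langle \psi \rangle = \prod_k(P/\gamma_k)^d$ and
\begin{align*}
    \C E = \max_{1 \le j \le m} \frac{\log\langle\psi\rangle}{\log n_j^m} = \frac{\sum_i \log(P/\gamma_i)}{m\log(P/\gamma_{\max})},
\end{align*}
which agrees exactly with the quantity appearing in the hypothesis of Theorem~\ref{thm:lattice}. In particular, the assumed lower bound on $s - \dim \sing F$ is the direct analogue of that of Theorem~\ref{thm:gen}.

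\textbf{Circle method and main term.}
Expanding $F\bigl(\sum_k \B y'_k u_k\bigr) = \sum_{\B j \in J} F_{\B j}(Y')u^{\B j}$ and detecting the system $\{F_{\B j}(Y') = 0\}_{\B j \in J}$ by exponentials on $\mathbb T^r$ yields $N_C(P) = \int_{\mathbb T^r} S(\bm\alpha)\,\D\bm\alpha$ for a smoothly weighted exponential sum $S(\bm\alpha)$ over the sheared box. The major/minor arc treatment then mirrors the proof of Theorem~\ref{thm:gen}: the stated dimension condition delivers the required cancellation on the minor arcs through Birch-type Weyl differencing, while on the major arcs the sum decouples into the product of the real density $\chi_\infty$, the singular series $\prod_p \chi_p$, and the real volume of the admissible region. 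The latter equals $(2P)^{ms}/(\det C)^s$ since the Jacobian of $Y' \mapsto Y'DV$ is $\pm(\det C)^s$, which assembles into the announced main term $(P^m/\det C)^s P^{-rd} \chi_\infty \prod_p \chi_p$. Because the system $\{F_{\B j} = 0\}$ depends on $F$ alone, the local densities $\chi_p$ and $\chi_\infty$ are independent of $C$.

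\textbf{Main obstacle.}
The sole genuine novelty is that the $Y'$-domain is the sheared parallelepiped $\{Y' : |Y'DV|_\infty \le P\}$ rather than the axis-aligned box $\prod_k[-P/\gamma_k, P/\gamma_k]^s$ to which the Weyl-differencing estimates of \cite{FRF} and the proof of Theorem~\ref{thm:gen} most naturally apply. The plan is to insert a smooth weight adapted to this sheared region and to verify that the shear only multiplies the relevant Weyl sums by $O_V(1)$ factors, without altering the exponent structure of the minor-arc bounds. As $V \in \mathrm{GL}_m(\mathbb Z)$ is a fixed integer matrix of unit determinant and bounded norm (independent of $P$), this is expected to be a routine but careful book-keeping exercise rather than a fundamentally new analytic input.
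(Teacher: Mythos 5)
Your overall route is the same as the paper's: reduce via the Smith normal form so that the problem becomes the $\psi=0$ case of an anisotropic counting problem with box sizes $P/\gamma_i$, observe that the resulting eccentricity-type quantity is exactly $\frac{\sum_i\log(P/\gamma_i)}{m\log(P/\gamma_{\mathrm{max}})}$, and then run the circle method of the proof of Theorem~\ref{thm:gen}. This is how the paper proceeds, except that it formalises the ``free box parameters'' step as Theorem~\ref{thm:rep} (note that Theorem~\ref{thm:gen} as stated requires $\psi$ positive definite, so it cannot be invoked verbatim for $\psi\equiv 0$); the analytic content is the tripartite Weyl inequality, the shrinking argument of Lemma~\ref{lem3:geom} with the constraint $\theta\le 1-\eta$, $\eta=\log\gamma_{\mathrm{max}}/\log P$, and the pruning Lemma~\ref{lem3:pruning}, which is where the hypothesis of Theorem~\ref{thm:lattice} is actually consumed. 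Deferring to that machinery is legitimate and matches the paper.

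There are, however, two concrete gaps. First, the sheared domain. Your resolution rests on the claim that $V$ is ``a fixed integer matrix of unit determinant and bounded norm (independent of $P$)'' so that the shear only costs $O_V(1)$. Nothing bounds the entries of $V$ in terms of $P$ or of the $\gamma_i$, and since the hypothesis of the theorem involves the $\gamma_i$, which are allowed to grow with $P$, constants that are merely $O_V(1)$ are not harmless; moreover, Weyl differencing over a highly eccentric parallelepiped is not obtained from the product-box case by bookkeeping. The paper never differences over a sheared region at all: in Section~\ref{p:setup} it substitutes $\B t\mapsto V\B t$ and $Y\mapsto ZU$ to pass (admittedly rather briskly) to diagonal $C$, and all subsequent estimates are for genuine products of boxes $[-P/\gamma_i,P/\gamma_i]^s$. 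If you insist on keeping the shear, you need a genuine geometry-of-numbers input -- for instance a basis of the row lattice $\mathbb Z^m C$ reduced with respect to the sup norm, for which the region $\{Y':|Y'DV|_\infty\le P\}$ is comparable to a product of boxes whose side lengths are successive minima -- not a smooth weight plus an unquantified $O_V(1)$. Second, the main term. With boxes of size $P/\gamma_k$, each of the $r$ equations $\Phi_{\B j}(Y')=0$ carries the scale $P_{j_1}\cdots P_{j_d}=P^d/(\gamma_{j_1}\cdots\gamma_{j_d})$, not $P^d$, so the singular integral naturally normalises to $\bigl(\prod_k P/\gamma_k\bigr)^{s-rd/m}=(P^m/\det C)^{s-rd/m}$, which differs from the announced $(P^m/\det C)^sP^{-rd}$ by the factor $(\det C)^{rd/m}$; since $\chi_\infty$ and $\chi_p$ are required to be independent of $C$, this factor cannot be hidden in them. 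Your sentence ``which assembles into the announced main term'' skips exactly the renormalisation the paper carries out around \eqref{prod-hhgam}, and it is the one place where the exponent of $\det C$ is decided (compare the one-variable sanity check $\B x\in\gamma\mathbb Z^s$, $|\B x|\le P$, where the count is of order $(P/\gamma)^{s-d}$). You must track this factor explicitly rather than assert the final shape.
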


Observe that in the case $m=1$ Theorem~\ref{thm:lattice} reduces to counting solutions $\B x \in (d \mathbb Z)^s$ of height $P$ solving $F(\B x)=0$, which by homogeneity is equivalent to counting $\B x \in \mathbb Z^s$ of height $P/d$. In the higher-dimensional setting, however, the situation is more complicated, but Theorem~\ref{thm:lattice} shows that nonetheless a similar argument can be made even in this case. This situation is generalised for higher values of $m$ here. Of course, a weakness of Theorem \ref{thm:lattice} is that it does not account for unimodular coordinate transforms. This is an issue we hope to address in future work.

Meanwhile, in the case $C=\Id$ the result in Theorem \ref{thm:lattice} as well as Theorem \ref{thm:gen} improves on our former result in \cite{FRF}, replacing the condition $s-\dim \sing F > 3 \cdot 2^{d-1}(d-1)(r+1)$ by the milder $s-\dim \sing F > 2^{d}r(d-1)$. This improvement carries over to the case of systems of $R$ forms, where the respective bound on the number of variables is given by $s-\dim \sing \B F > 2^{d-1}(d-1)Rr(R+1)$.\\

I would like to express my gratitude towards my PhD supervisor Trevor Wooley for his keen insight and constant encouragement, and to my examiners Tim Browning and Rainer Dietmann for valuable comments. This work is based on the author's PhD thesis.

\section{Setup}\label{p:setup}

\textbf{Notation.} Although we believe our notation to be largely self-explanatory, we would like to point out a few items that will be used recurringly.

We will use the Vinogradov and Landau symbols throughout. Whenever the letter $\eps$ occurs, the respective statement is true for all $\eps>0$. We will therefore not trace the particular `value' of each $\eps$, which can consequently change from statement to statement. Furthermore, whenever we write $\sum_{n=a}^b f(n)$ with possibly non-integral boundaries $a$ or $b$, the sum is to be understood to mean $\sum_{a \le n \le b} f(n)$. Occasionally we will write $\sum_{x \ll P}f(x)$, which should be interpreted as $\sum_{-c_1P \le x \le c_2P} f(x) $ with suitable absolute constants $c_1, c_2>0.$

We will abuse vector notation extensively, so any statement involving vectors should be read entry-wise. In this vein we will write $|\B x| \le P$ to mean $|x_i| \le P$ for all entries $i$. Similarly, $(a,\B x)$ denotes the greatest common divisor $(a, x_1, \ldots, x_n)$ of $a$ and all entries of $\B x$. We are confident that no misunderstandings will arise if all similar statements are read in a like manner.

Finally, we will occasionally write $P_{\mathrm{min}}=\min_{i}P_i$, and similarly $\gamma_{\mathrm{max}} = \max_{i} \gamma_i$. \\

Before embarking on the proof of our results, it is useful to show how they are connected with one another. To enhance clarity, we reformulate Theorem~\ref{thm:gen} as follows.

\begin{thm}\label{thm:rep}
    Let $F \in \mathbb Z[x_1,\dots,x_s]$ and $\psi \in \mathbb Z[t_1,\dots,t_m]$ be forms of degree $d \ge 2$, where $m \ge 2$, and let $P_1 \le \dots \le P_m$ be large.  Furthermore, suppose that 
    \begin{align*}
        s - \dim \sing F>2^{d-1} \max \left\lbrace 2r(d-1), rd \left(\frac{ \sum_{i=1}^m \log P_i}{m \log P_1}\right)\right\rbrace\hspace{-.8mm}.
    \end{align*}
     Then the number $N_{\psi}(P_1,\dots,P_m)$ of points $\B x_i \in [-P_i, P_i]^s \cap \mathbb Z^s$ $(1 \le i \le m)$ solving \eqref{rep} identically in $t_1, \ldots, t_m$ is given by
    \begin{equation*}
        N_{\psi}(P_1, \dots,P_m) = \Big(\prod_{i=1}^m P_i \Big) ^{s-rd/m} \chi_{\infty}(F; \psi) \prod_{p \;\mathrm{ prime}} \chi_p(F; \psi) + o\left(\Big(\prod_{i=1}^m P_i \Big) ^{s-rd/m}\right)\hspace{-0.5mm},
    \end{equation*}
    where the factors are given by 
    \begin{align*}
      \chi_{\infty}(F; \psi) = \vol_{\infty} \left\{|{\bm{\xi}_i}|_\infty \le 1  \; (1 \le i \le m): F(\bm{\xi}_1 t_1 + \dots + \bm{\xi}_m t_m) = \psi(\textstyle{\frac{t_1}{P_1},\dots,\frac{t_m}{P_m}}) \right\} 
    \end{align*}
    and 
    \begin{align*}
      \chi_p(F; \psi) = \vol_{p}\{|{\bm{\xi}_i}|_p \le 1  \; (1 \le i \le m): F(\bm{\xi}_1 t_1 + \dots + \bm{\xi}_m t_m) = \psi(t_1,\dots,t_m) \}
    \end{align*}
    and denote the $(ms-r)$-dimensional volume of the normalised set of solutions in the real and $p$-adic unit cubes, respectively. 
\end{thm}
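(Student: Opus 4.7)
The identity \eqref{rep} is equivalent to the system of $r$ diophantine equations
\begin{equation*}
F_{\B j}(\B x_1,\ldots,\B x_m) = n_{\B j} \qquad (\B j \in J),
\end{equation*}
where $F_{\B j}$ denotes the coefficient of $t_{j_1}\cdots t_{j_d}$ in the expansion of $F(\B x_1 t_1 + \cdots + \B x_m t_m)$; each such $F_{\B j}$ is multi-homogeneous of total degree $d$ and depends only on those $\B x_i$ for which $i$ appears in $\B j$. My plan is to attack this system by the Hardy--Littlewood circle method, the novelty being that each variable $\B x_i$ is restricted to a box $[-P_i,P_i]^s$ of its own sidelength. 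With the generating function
\begin{equation*}
S(\ba) = \sum_{|\B x_1|\le P_1}\cdots\sum_{|\B x_m|\le P_m} e\!\left( \sum_{\B j \in J} \alpha_{\B j}\, F_{\B j}(\B x_1,\ldots,\B x_m) \right)\!,
\end{equation*}
orthogonality yields $N_\psi(P_1,\ldots,P_m) = \int_{[0,1)^r} S(\ba)\, e\bigl(-\sum_{\B j} \alpha_{\B j} n_{\B j}\bigr)\, d\ba$, and I would dissect $[0,1)^r$ into major arcs $\mathfrak M$ and minor arcs $\mathfrak m$ in the customary Birch fashion.

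On $\mathfrak M$, a standard pruning argument replaces $S(\ba)$ by a product of a complete $p$-adic exponential sum and a real integral, producing a main term of the shape $\bigl(\prod_{i=1}^m P_i\bigr)^{s-rd/m}\,\mathfrak S\,\mathfrak J$, in which $\mathfrak S = \prod_p \chi_p(F;\psi)$ is the singular series and $\mathfrak J$ is the singular integral. To match $\mathfrak J$ against the $\chi_\infty(F;\psi)$ of the statement, one substitutes $\B x_i = P_i \bm{\xi}_i$: the system rescales to $F(\bm{\xi}_1 t_1 + \cdots + \bm{\xi}_m t_m) = \psi(t_1/P_1, \ldots, t_m/P_m)$ with each $\bm{\xi}_i$ ranging over the unit cube, which is exactly the shape required. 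Convergence of the Euler product and positivity of each local factor, conditional on local solvability, then follow from classical manipulations essentially identical to those in \cite{FRF}.

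The principal obstacle is the minor-arc analysis, which demands a Weyl-type bound for $S(\ba)$ sensitive to the unequal scales $P_i$. Adapting the multi-dimensional Birch-type argument of \cite{FRF}, Weyl differencing should yield a bound of the shape $|S(\ba)| \ll \prod_i P_i^s \cdot P_{\mathrm{min}}^{-\sigma}$ on $\mathfrak m$ with $\sigma$ proportional to $(s-\dim\sing F)/2^{d-1}$, valid as soon as the first threshold $s-\dim\sing F > 2^d r(d-1)$ is met. Comparing this with the target main term, one sees that sufficient savings are achieved precisely when $\sigma$ exceeds $rd\,(\sum_i \log P_i)/(m \log P_{\mathrm{min}})$, which is the second condition in the hypothesis. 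The delicate point is the calibration of Weyl differencing to the unequal-scale regime: one must ensure that the exponent $\sigma$, which intrinsically measures the savings in the variable along which differencing is performed, is not eroded by the presence of variables whose boxes are significantly larger than $P_{\mathrm{min}}$.
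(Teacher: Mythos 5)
Your overall strategy is the paper's: the same generating function over unequal boxes $[-P_i,P_i]^s$, a Birch-style arc dissection, major arcs handled as in \cite{FRF} with the substitution $\B x_i = P_i\bm{\xi}_i$ producing exactly the stated $\chi_\infty(F;\psi)$, and the second hypothesis entering when the minor-arc saving is compared against the main term $\bigl(\prod_i P_i\bigr)^{s-rd/m}$. That part of your sketch is accurate, and the shape of the minor-arc bound you posit (savings measured in powers of $P_{\mathrm{min}}=P_1$, with exponent up to $(s-\dim\sing F)/2^{d-1}$) is indeed what the paper ultimately proves.

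However, there is a genuine gap precisely at the point you yourself flag as ``the delicate point'': you assert, but do not prove, the unequal-scale Weyl bound, and this is the entire technical novelty of the theorem. Differencing in the variable $\B x_{j_l}$ produces difference vectors $\B h_l$ ranging over boxes of size $P_{j_l}$, so the resulting counting problem for the multilinear forms $B_n(\B h_1,\ldots,\B h_{d-1})$ lives on boxes of wildly different sizes, and one cannot directly run Birch's dichotomy (rational approximation versus large singular locus) on it. The paper resolves this with a geometry-of-numbers step (its Lemma~3.4, via Davenport's shrinking lemma, Lemma~12.6 of \cite{dav}) applied recursively with carefully chosen parameters $A_k, Z_k, Z_k'$ so as to replace every range $P_{j_k}$ by a common range $P^{\theta}$; this is legitimate only when $P^{\theta}\le P_1$, i.e.\ $\theta \le 1-\eta$ with $\eta = \log(P_m/P_1)/\log P_m$, and it is exactly this cap on $\theta$ that limits the attainable saving to roughly $P_1^{-k}$ and, after the pruning comparison, generates the condition $k > rd\bigl(\sum_i \log P_i\bigr)/(m\log P_1)$. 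Without carrying out this step (or an equivalent one), your claimed bound $|S(\ba)|\ll \prod_i P_i^{s}\,P_{\mathrm{min}}^{-\sigma}$ on the minor arcs is unsupported, and with it the second hypothesis of the theorem has no derivation; note also that in the complete argument the dichotomy must be run for each $\B j\in J$ separately (distinct estimates for distinct multi-indices), the singular alternative removed using $s-\dim\sing F>2^{d-1}k$, and the minor-arc integral controlled by the standard nested-arc pruning rather than a single sup-bound--times--measure estimate. The major-arc and local-factor parts of your proposal are routine adaptations of \cite{FRF}, as you say, but the core lemma is missing.
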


Theorem~\ref{thm:gen} follows from here by setting $P_i = n_i^{1/d}$. With this choice, the dependence on $P_1,\dots,P_m$ of the real solution density amounts to a re-normalisation of the target form to unit length, where $\psi$ is replaced by
\begin{align}\label{psi-til}
    \widetilde\psi (t_1, \dots, t_m) =  \sum_{\B j \in J} \tilde n_{\B j} t_{j_1} \cdot \ldots \cdot t_{j_d}
\end{align}
with coefficients $\tilde n_{\B j}= n_{\B j}(n_{j_1} \cdot \ldots \cdot n_{j_d})^{-1/d}$ which satisfy $\tilde n_{\B j} \le 1$ for all $\B j \in J$ whenever $\psi$ is pseudo-diagonal.
Thus, while the size of the main term is determined by the absolute dimensions of $\psi$ as defined by $\langle \psi \rangle$, the real solution density provides a correction factor by tracking the intrinsic deformity of the body defined by $\psi$ that is preserved after its renormalisation to unit length.

Similarly, Corollary~\ref{cor:rep2} follows from specialising $d=2$, so that the form $B$ is described by an $(m \times m)$-matrix. In the quadratic case the question of whether or not a positive definite form is pseudo-diagonal is easy to answer.

\begin{lem}\label{pseudo-diag}
	Every positive definite quadratic form is pseudo-diagonal. 
\end{lem}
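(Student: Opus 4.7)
The pseudo-diagonal condition reduces, in the quadratic case, to showing that $|n_{\B j}| \le (n_{j_1} n_{j_2})^{1/2}$ for every $\B j = (j_1, j_2) \in J$. The diagonal multi-indices $j_1 = j_2 = i$ yield the trivial equality $n_i \le n_i$, so the substantive content of the lemma lies in the off-diagonal case $j_1 \ne j_2$.

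My plan is to exploit the standard fact that principal submatrices of positive definite matrices remain positive definite. Let $B$ denote the symmetric Gram matrix associated with $\psi$, whose $(i,j)$-entry is precisely the coefficient $n_{i,j}$. For any pair of distinct indices $i, j$, I would restrict $\psi$ to the two-dimensional coordinate subspace obtained by setting $t_k = 0$ for every $k \notin \{i,j\}$. This restriction is again a positive definite quadratic form, and its Gram matrix is exactly the $2 \times 2$ principal submatrix
\[
\begin{pmatrix} n_i & n_{i,j} \\ n_{i,j} & n_j \end{pmatrix}
\]
of $B$. Positive definiteness of this submatrix is equivalent to the positivity of its determinant (its trace being automatically positive), which forces $n_i n_j - n_{i,j}^2 > 0$. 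Rearranging gives the required bound $|n_{i,j}| < (n_i n_j)^{1/2}$, which is strictly stronger than the pseudo-diagonal inequality.

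The only point requiring real care is the notational convention identifying the coefficients $n_{\B j}$ with entries of the symmetric Gram matrix of $\psi$ (absorbing, as necessary, the customary factor of $\tfrac{1}{2}$ on off-diagonal terms relative to the raw polynomial coefficients). Once this convention is fixed the argument reduces to a single determinant positivity check on a $2 \times 2$ principal minor, so I do not anticipate any genuine mathematical obstacle in the proof itself; the difficulty is purely bookkeeping.
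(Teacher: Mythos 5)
Your proof is correct, and it reaches the same conclusion by a slightly different (and arguably cleaner) route than the paper. Both arguments ultimately restrict attention to the coordinate plane spanned by $\B e_i$ and $\B e_j$, but the paper argues by contradiction, explicitly exhibiting a vector $\lambda \B e_i \mp \mu \B e_j$ with $\lambda=\sqrt{b_j}$, $\mu=\sqrt{b_i}$ on which the form would be non-positive if $|b_{ij}|>\sqrt{b_i b_j}$, whereas you invoke the standard fact that the $2\times 2$ principal submatrix $\bigl(\begin{smallmatrix} b_i & b_{ij}\\ b_{ij} & b_j\end{smallmatrix}\bigr)$ inherits positive definiteness and hence has positive determinant, giving $b_{ij}^2 < b_i b_j$ directly (in fact strictly, just as the paper's argument does implicitly). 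The explicit-witness proof is self-contained and uses nothing beyond the definition of positive definiteness, while your minor-criterion proof is shorter but leans on a cited linear-algebra fact; mathematically they are equivalent in content. Your cautionary remark about the coefficient convention is well placed and consistent with the paper: the paper's own proof also works with the entries $b_{ij}$ of the symmetric matrix $B$ (i.e.\ the Gram/multilinear-form coefficients, with the factor $\tfrac12$ on off-diagonal polynomial coefficients absorbed), which is exactly the normalisation under which the claimed inequality, and hence the lemma, is true.
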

\begin{proof}
	Obviously every positive definite matrix has non-negative diagonal entries, for if $\B e_i$ denotes the $i$-th unit vector, one has $b_i = \B e_i^t B \B e_i \ge 0$. 
	Suppose now that $B$ is a symmetric matrix that is not pseudo-diagonal, so for some $i, j$ one has $|b_{ij}| > \sqrt{b_i b_j} $. Let us first consider the case $b_{ij} > \sqrt{b_i b_j}$, then for every choice of $\lambda, \mu>0$ one has
	\begin{align*}
		(\lambda \B e_i - \mu \B e_j)^t B(\lambda \B e_i - \mu \B e_j) &= \lambda^2 b_i - 2 \lambda \mu b_{ij} + \mu^2 b_j \\
		&< \lambda^2 b_i - 2 \lambda \mu \sqrt{b_ib_j} + \mu^2 b_j= (\lambda \sqrt{b_i} - \mu \sqrt{b_j})^2.
	\end{align*}
	Choosing $\lambda=\sqrt{b_j}$ and $\mu = \sqrt{b_i}$ delivers a contradiction to the assumption that $B$ is positive definite. On the other hand, if $b_{ij} < -\sqrt{b_ib_j}$ we consider the expression 
	\begin{align*}
		(\lambda \B e_i + \mu \B e_j)^t B(\lambda \B e_i + \mu \B e_j) &= \lambda^2 b_i + 2 \lambda \mu b_{ij} + \mu^2 b_j \\
		&< \lambda^2 b_i - 2 \lambda \mu \sqrt{b_ib_j} + \mu^2 b_j = (\lambda \sqrt{b_i} - \mu \sqrt{b_j})^2,
	\end{align*}
	where again we assumed $\lambda, \mu >0$, and the remainder of the argument follows as above. This shows the statement.
\end{proof}
It follows that for any positive definite matrix one has $\det B \asymp b_1 \cdot \ldots \cdot b_m$, and in fact, whilst from Theorem~\ref{thm:rep} we obtain  
\begin{align*}
	\chi_{\infty}(A;B) = \int_{\mathbb R^{m(m+1)/2}} \int_{[-1,1]^{ms}} e \left( \beta_{ij} \bigg(\bm{\xi}_i^T A \bm{\xi}_j - \frac{b_{ij}}{\sqrt{b_i b_j}}  \bigg) \right) \D  \bm{\xi}_1 \cdots \D  \bm{\xi}_m  \D  \bb,
\end{align*}
(see equation \eqref{chi_inf} below), it follows from \cite[\S 6]{rd-harvey} that indeed 
\begin{align*}
	(b_1 \cdot \ldots \cdot b_m)^{(s-m-1)/2}\chi_{\infty}(A;B) &= (\det A)^{-m/2}(\det B)^{(s-m-1)/2} c_{s,m},
\end{align*}
where $c_{s,m}$ is as given in \eqref{c}. \\

Also the third result given in the introduction is essentially a special case of Theorem~\ref{thm:rep} corresponding to the zero polynomial $\psi=0$. First, observe that Theorem~\ref{thm:lattice} depends only on the Smith normal form of $C$. This can be seen as follows. Suppose $U,V \in \SL_m(\mathbb Z)$ are such that $UCV$ is diagonal. By interpreting the vectors $\B x_1, \ldots, \B x_m$ as a matrix $X$, we can view the function $N_C(P)$ as counting matrices $X \in \mathbb Z^{m \times s} C$ of height at most $P$ for which $F(X \B t)= 0$ is true identically in $\B t$. This implies, however, that we may replace $\B t$ by $V\B t$ in the statement and equivalently demand that $F(X V\B t)= 0$ identically in $\B t$. On the other hand, the condition $X \in \mathbb Z^{m \times s} C$ can be written as $X=YC$ with $Y \in \mathbb Z^{m \times s}$, so $N_C(P)$ counts the matrices $Y$ for which $YC$ is of height at most $P$ and $F(YC \B t)= 0$ identically in $\B t$. Since $U$ is unimodular, we may equivalently set $Y=ZU$ and count matrices $Z\in \mathbb Z^{m \times s}$ such that $ZUC$ is of height at most $P$ and $F(ZUC \B t)=0$ identically in $\B t \in \mathbb Z^{m}$.
It follows that we may assume without loss of generality that $C$ is of the form $C = \mathrm{diag}(\gamma_1, \ldots, \gamma_m)$, and the condition
\begin{align*}
    (\B x_1, \ldots, \B x_m)  \in \mathbb Z^{s \times m} C \cap [-P,P]^{s \times m}
\end{align*}
translates into the simpler
\begin{align*}
    \B x_i  \in \gamma_i \mathbb Z^{s} \cap [-P,P]^{s} \qquad (1 \le i \le m).
\end{align*}
It thus remains to relate the counting function
\begin{align*}
 N_C(P) = \card \{ \B x_i \in \gamma_i \mathbb Z^s \cap [-P,P]^s \; (1 \le i \le m): F(\B x_1 t_1 + \dots + \B x_m t_m) = 0 \}
\end{align*}
to the more familiar
\begin{align*}
 N(P/\gamma_1,\dots,P/\gamma_m) = \card \{ |\B x_i| \le P/ \gamma_i\; (1 \le i \le m): F(\B x_1 t_1 + \dots + \B x_m t_m) = 0\}
\end{align*}
which is addressed in Theorem \ref{thm:gen}. 

After expanding, the form $F$ may be written as 
\begin{equation*}
    F\left(t_1 \B x_1+ \ldots + t_m \B x_m\right)
    =\sum_{ \B j \in J} A(\B j) t_{j_1}t_{j_2}\cdot \ldots \cdot t_{j_d} \Phi(\B x_{j_1}, \B x_{j_2},\ldots, \B x_{j_d}),
\end{equation*}
where we use the notation introduced in \cite{FRF} in writing $\Phi$ for the symmetric $d$-linear form associated to $F$ and $A(\B j)$ for the combinatorial factors that take into account the multiplicity of each term. Furthermore, write $\ol x = (\B x_1, \ldots, \B x_m)$ and $\ba=(\alpha_{\B j})_{\B j \in J}$, and let 
\begin{equation*}
    \Phi_{\B j}(\ol x) =  A(\B j)\Phi(\B x_{j_1}, \dots \B x_{j_d}) \qquad (\B j \in J)
\end{equation*}
and
\begin{align}\label{def-F}
    \F F\big(\B x_1, \ldots, \B x_m; \ba\big)= \sum_{ \B j \in J} \alpha_{\B j}  \Phi_{\B j}(\ol x) .
\end{align}
In the context of Theorem~\ref{thm:rep} the variables $\B x_i$ lie in intervals $[-P_i,P_i]$, the Cartesian product of which which we denote by $\C P$. Notice that this can be transformed into the language of Theorem~\ref{thm:lattice} by by setting $P_m = P$ and $\gamma_i = P_m/P_i$ for all $i$. In this notation we have
\begin{equation*}
    \card \C P = \prod_{i=1}^m P_i = \prod_{i=1}^m \frac{P}{\gamma_i} = \frac{P^m}{\det C }.
\end{equation*}
Thus classical orthogonality relations imply that the number of solutions to \eqref{lattice} with $\B x_i \le P_i$ is given by
\begin{equation*}
    N_\psi(\C P)=  \int_{[0,1)^r} T(\ba; \C P) e(- \B n \cdot \ba)\D  \ba, 
\end{equation*}
where the exponential sum is defined as
\begin{equation*}
    T(\ba; \C P) = \sum_{\ol x \in \C P^s} e(\F F(\ol  x; \ba)).
\end{equation*}

In the case of Theorem~\ref{thm:lattice}, on the other hand, the counting function takes the shape
\begin{align*}
    N_C(P)&=  \int_{[0,1)^r}\sum_{\ol x C \le P} e\left( \F F\left(\ol  xC; \ba\right)\right) \D  \ba.
\end{align*}
Since we may assume $C$ to be diagonal, the variables $\B x_i$ are multiples of the diagonal entries $\gamma_i$ of $C$, and by homogeneity we may write
\begin{align*}
    \F F\big(\gamma_1 \B x_1, \ldots, \gamma_m \B x_m; \ba\big)&= \sum_{ \B j \in J} \alpha_{\B j}   A(\B j)\Phi(\gamma_{j_1}\B x_{j_1}, \ldots, \gamma_{j_d}\B x_{j_d}) \\
    &=\sum_{ \B j \in J} \alpha_{\B j} \hgam A(\B j)\Phi(\B x_{j_1}, \ldots, \B x_{j_d}) ,
\end{align*}
where we introduced the notation $\hgam$ for the product $\gamma_{j_1} \cdot \ldots \cdot \gamma_{j_d}$. Absorbing the factors $\hgam$ into the coefficients $\alpha_{\B j}$, we see that the number of solutions is given by
\begin{align*}
    N_C(P) =\int_{[0,1)^r}\sum_{\ol x \in \C P^s} e\left( \sum_{ \B j \in J} \alpha_{\B j} \hgam \Phi_{\B j}(\ol x) \right) \D  \ba =\Big(\prod_{\B j \in J} \hgam\Big)^{-1}  N(\C P).
\end{align*}
The product in the last expression is symmetric in the $\gamma_i$ and has altogether $rd$ factors, so its value is
\begin{align}\label{prod-hhgam}
    \prod_{\B j \in J}\hgam  = (\gamma_1 \cdot \ldots \cdot \gamma_m)^{rd/m} =  (\det C)^{rd/m}.
\end{align}
It is therefore the counting function $N_{\psi}(\C P)$ considered in Theorem~\ref{thm:rep} towards which we will direct our attention.

\section{The tripartite Weyl inequality}

The first step is to establish an inequality of Weyl type. Although the greater picture of this is by now fairly standard, the different ranges of the $\B x_i$ create some technical complications which need to be attended to with due care.

We define the discrete difference operator by its action on the form $\F F(\ol x; \ba)$ as
\begin{equation*}
    \Delta_{i, \B h} \F F(\ol  x; \ba) = \F F(\B x_1, \ldots, \B x_{i} + \B{h}, \ldots, \B x_m; \ba) - \F F(\B x_1, \ldots, \B x_{i}, \ldots, \B x_m;\ba),
\end{equation*}
and write for brevity
\begin{equation*}
    \Delta^{(k)}_{\B h_{\B j}} = \Delta_{k, \B h_{j_k}} \cdots \Delta_{1, \B h_{j_1}}.
\end{equation*}
This allows us to formulate our first Weyl differencing lemma.

\begin{lem}\label{lem3:weyl1}
    Let $1 \le k \le d-1$ and fix $\B j \in J$. We have
    \begin{equation*}
        | T(\ba; \C P)|^{2^k} \ll \left(\frac{P^m}{\det C }\right)^{\hspace{-1mm} (2^k-1)s} \Big(\prod_{l=1}^k P_{j_l}^{-s}\Big) \dsum{\B h_l \ll P_{j_l}}{1 \le l \le k} \sum_{\ol x }  e\left(\Delta^{(k)}_{\B h_{\B j}} \F F(\ol x; \ba)\right)\hspace{-0.5mm},
    \end{equation*}
    and the sum over $\ol x$ is over a suitable box contained in $ \C P^s$.
\end{lem}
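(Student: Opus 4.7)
The plan is to proceed by induction on $k$, the key ingredient at each stage being one round of Weyl differencing in the variable $\B x_{j_k}$, combined with a Cauchy--Schwarz step that absorbs the remaining $\ol x$-variables.

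For the base case $k=1$, I would separate out the variable $\B x_{j_1}$ in the definition of $T(\ba; \C P)$ to write
\begin{equation*}
    T(\ba; \C P)=\sum_{(\B x_i)_{i \ne j_1}}\;\sum_{\B x_{j_1}} e(\F F(\ol x; \ba)).
\end{equation*}
Applying Cauchy--Schwarz to the outer sum, which ranges over $\prod_{i\ne j_1}(2P_i+1)^s \asymp (P^m/\det C)^s P_{j_1}^{-s}$ terms, produces exactly the prefactor predicted by the lemma. The inner square is then converted by the standard translation identity
\begin{equation*}
    \Big|\sum_{\B x_{j_1}}e(\F F)\Big|^2 = \sum_{\B h_{j_1}\ll P_{j_1}}\;\sum_{\B x_{j_1} \in B} e(\Delta_{1,\B h_{j_1}}\F F),
\end{equation*}
where $B$ is a sub-box of $[-P_{j_1},P_{j_1}]^s$ depending on $\B h_{j_1}$. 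Reassembling the two steps yields the lemma for $k=1$.

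For the inductive step, I would assume the bound for $k-1$, square it, and apply Cauchy--Schwarz to the outer $(k-1)$-fold sum over $\B h_1,\ldots,\B h_{k-1}$ (containing $\asymp\prod_{l<k}P_{j_l}^s$ terms). This incurs a factor $\prod_{l<k}P_{j_l}^s$ and reduces matters to bounding $\sum_{\B h_1,\ldots,\B h_{k-1}}|V_{k-1}|^2$ with $V_{k-1}=\sum_{\ol x}e(\Delta^{(k-1)}_{\B h_{\B j}}\F F)$. Now the base-case argument, applied with $\B x_{j_k}$ in the role of $\B x_{j_1}$ and $\Delta^{(k-1)}_{\B h_{\B j}}\F F$ in the role of $\F F$, yields an additional factor $(P^m/\det C)^s P_{j_k}^{-s}$ and introduces the new summation over $\B h_{j_k}\ll P_{j_k}$ together with the extra differencing $\Delta_{k,\B h_{j_k}}$. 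A short bookkeeping calculation shows that the cumulative prefactor collapses to $(P^m/\det C)^{(2^k-1)s}\prod_{l=1}^{k}P_{j_l}^{-s}$, as required.

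The only genuine complication is the nested sub-boxing: after each squaring step the range of the $\B x_{j_l}$-variable is translated by $\B h_{j_l}$ and hence restricted to a shorter sub-interval of $[-P_{j_l},P_{j_l}]^s$. Since the lemma only requires the residual region to lie inside \emph{some} box contained in $\C P^s$, this property is automatically preserved under the induction and no additional care is needed. The underlying mechanism is just the classical Weyl differencing trick, adapted to the multi-parameter setting with different sizes $P_{j_l}$ in each coordinate direction.
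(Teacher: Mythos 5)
Your proposal is correct and follows essentially the same route as the paper: induction on $k$ via Cauchy--Schwarz and the standard Weyl translation identity, with the count of off-diagonal variables producing the factor $(P^m/\det C)^s P_{j_k}^{-s}$ at each stage. The only difference is organisational (you split the Cauchy--Schwarz step into one over the $\B h_l$ and one over the $\B x_i$ with $i \ne j_k$, whereas the paper combines them), and your bookkeeping and the sub-box observation match the paper's argument.
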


\begin{proof}
    The proof is, as usual, by induction.
     The case $k=1$ follows by Cauchy--Schwarz via
    \begin{align*}
        | T(\ba; \C P)|^2 &\ll \Bigg( \dsum{|\B x_i| \le P_i}{i \neq j_1} 1\Bigg) \dsum{|\B x_i| \le P_i}{i \neq j_1}  \Bigg|\sum_{|\B x_{j_1}| \le P_{j_1}} e(\F F(\ol x; \ba)) \Bigg|^2 \\
        &\ll  \left(\frac{P^m}{\det C }\right)^{\hspace{-1mm} s}   P_{j_1}^{-s} \sum_{|\B h_1| \le P_{j_1}} \sum_{\ol x } e \left( \Delta_{j_1, \B h_1} \F F(\ol x; \ba)  \right)\hspace{-0.5mm}.
    \end{align*}
    Note that the final summation of $\B x_{j_1}$ is over the set 
    \begin{align*}
	\{ \B x_{j_1} \in \mathbb Z^s: |\B x_{j_1}| \le P_{j_1}, |\B x_{j_1}+\B h_1| \le P_{j_1}\},
    \end{align*}
    which is again a box contained in $[-P_{j_1}, P_{j_1}]^s$.
    The induction step is similar. By another application of Cauchy--Schwarz one has
    \begin{align*}
         &\Bigg|\dsum{\B h_l \ll P_{j_l}}{1 \le l \le k} \sum_{\ol  x }  e\left(\Delta^{(k)}_{\B h_{\B j}} \F F(\ol x; \ba)\right)\Bigg|^2 \\
         &\quad \ll \Bigg( \dsum{\B h_l \ll P_{j_l}}{1 \le l \le k} \dsum{\B x_i \ll P_i}{i \neq j_{k+1}} 1 \Bigg) \dsum{\B h_l \ll P_{j_l}}{1 \le l \le k}\dsum{\B x_i \ll P_i}{i \neq j_{k+1}} \Bigg|\sum_{\B x_{j_{k+1}} \ll P_{j_{k+1}}}e\left(\Delta^{(k)}_{\B h_{\B j}} \F F(\ol x; \ba)\right)  \Bigg|^2 \\
         &\quad \ll \left(\frac{P^m}{\det C }\right)^{\hspace{-1mm} s} P_{j_{k+1}}^{-s} \Big(\prod_{l=1}^k P_{j_l}^s\Big)  \dsum{\B h_l \ll P_{j_l}}{1 \le l \le k+1} \sum_{\ol  x} e\left(\Delta^{(k+1)}_{\B h_{\B j}} \F F(\ol x; \ba)\right),\hspace{-1mm}
    \end{align*}
    and hence
    \begin{align*}
        | T(\ba; \C P)|^{2^{k+1}} & \ll \left(\frac{P^m}{\det C }\right)^{\hspace{-1mm} (2^{k+1}-2)s} \Big(\prod_{l=1}^k P_{j_l}^{-2s}\Big) \Bigg| \dsum{\B h_l \ll P_{j_l}}{1 \le l \le k} \sum_{\ol x}  e\left(\Delta^{(k)}_{\B h_{\B j}} \F F(\ol x; \ba)\right)\Bigg|^2 \\
        &\ll \left(\frac{P^m}{\det C }\right)^{\hspace{-1mm} (2^{k+1}-1)s} \Big(\prod_{l=1}^{k+1} P_{j_l}^{-s}\Big)  \dsum{\B h_l \ll P_{j_l}}{1 \le l \le k+1} \sum_{\ol x } e\left(\Delta^{(k+1)}_{\B h_{\B j}} \F F(\ol x; \ba)\right)\hspace{-0.5mm}.
    \end{align*}
    This completes the proof.
\end{proof}

As is usual in Weyl differencing arguments, we notice that the differencing procedure gradually reduces the degree while preserving the structure of the system. In our case, this means that after $d-1$ applications the resulting expression is linear in the variables $\B x_1, \ldots, \B x_m$, and since all forms in the system of equations are instances of the same multilinear form $\Phi$, all of these linear expressions will be of the same shape. We abbreviate $\hs$ for the $(d-1)$-tuple $(\B h_1, \ldots, \B h_{d-1})$, and define the $(d-1)$-linear forms $B_n$, $1 \le n \le s$, via the relation
\begin{equation*}
    \Phi(\B x, \hs) = \sum_{n=1}^s B_n(\hs) x_{n},
\end{equation*}
so that one has 
\begin{align}\label{def-B}
    \Delta^{(d-1)}_{\B h_{\B j}} \F F(\ol x; \ba)&= \sum_{j_d=1}^m\sum_{n=1}^s\alpha_{\B j} M(\B j) B_n(\hs)x_{j_d,n} + R(\hs)
\end{align}
for some combinatorial factors $M(\B j)$ and some function $R(\hs)$ collecting the terms independent of $\ol x$. Lemma~\ref{lem3:weyl1} yields now
\begin{equation*}
		| T(\ba; \C P)|^{2^{d-1}} \ll \left(\frac{P^m}{\det C }\right)^{\hspace{-1mm} (2^{d-1}-1)s} \Big(\prod_{l=1}^{d-1} P_{j_l}^{-s}\Big) \dsum{\B h_l \ll P_{j_l}}{1 \le l \le d-1} \sum_{\ol x}  e\left(\Delta^{(d-1)}_{\B h_{\B j}} \F F(\ol x; \ba)\right)\hspace{-0.5mm},
\end{equation*}
and by \eqref{def-B} we have
\begin{align*}
    \sum_{\ol x}  e\left(\Delta^{(d-1)}_{\B h_{\B j}} \F F(\ol x; \ba)\right)&\ll\left(\frac{P^m}{\det C }\right)^{\hspace{-1mm} s} P_{j_d}^{-s}  \left| \sum_{|\B x_{j_d}| \ll P_{j_d}}  e\left(M(\B j) \alpha_{\B j} \Phi(\B x_{j_d}, \hs)\right)\right| \\
		&\ll \left(\frac{P^m}{\det C }\right)^{\hspace{-1mm} s} P_{j_d}^{-s} \prod_{n=1}^s  \min\left(P_{j_d},  \| M(\B j) \alpha_{\B j} B_n(\hs) \|^{-1} \right) \hspace{-0.5mm}.
\end{align*}
It follows that if we define 
\begin{equation*}
    \Upsilon(\B j) = \dsum{\B h_l \ll P_{j_l}}{1 \le l \le d-1} \prod_{n=1}^s \min\left(P_{j_d},  \| M(\B j) \alpha_{\B j}  B_n(\hs) \|^{-1} \right)\hspace{-0.5mm},
\end{equation*}
then we can bound the exponential sum as
\begin{align}\label{weyl2}
		| T(\ba; \C P)|^{2^{d-1}} \ll \left(\frac{P^m}{\det C }\right)^{\hspace{-1mm} 2^{d-1}s} \Big(\prod_{k=1}^d P_{j_k}^{-s}\Big)  \Upsilon(\B j).
\end{align}

We want to conclude from \eqref{weyl2} that either $ T(\ba; \C P)$ can be bounded non-trivially, or we have a good rational approximation to the vector $\ba$, or else the form $F$ has been highly singular from the beginning. 
Let $N_{\B j}(X_1, \ldots, X_{d-1}; Y)$ denote the cardinality of the set
\begin{align}\label{def-N}
    \Big\{|\B h_{k}| \le X_k \quad (1 \le k \le d-1)  \! :  \|M(\B j) \alpha_{\B j} B_n(\hs) \|^{-1} > Y \quad (1 \le n \le s)\Big\}.
\end{align}
We show that when $T(\ba; \C P)$ is large, then $N_{\B j}(P_{j_1}, \ldots, P_{j_{d-1}}; P_{j_d})$ is also large.
\begin{lem}\label{lem3:pre-geom}
    Suppose
    \begin{equation*}
        | T(\ba; \C P)| \gg \left(\frac{P^m}{\det C }\right)^{\hspace{-1mm} s} P^{-K}
    \end{equation*}
    for some parameter $K>0$. Then for every $\B j \in J$ one has
    \begin{equation*}
        N_{\B j}\left(P_{j_1}, \ldots, P_{j_{d-1}}; P_{j_d}\right) \gg P^{-2^{d-1}K-\eps} \prod_{k=1}^{d-1} P_{j_k}^s .
    \end{equation*}
\end{lem}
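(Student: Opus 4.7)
The plan is to deduce from the assumed lower bound on $|T(\ba;\C P)|$ a corresponding lower bound on the auxiliary sum $\Upsilon(\B j)$, and then to extract from it a lower bound on $N_{\B j}$ via dyadic pigeonhole combined with Davenport's shrinking lemma.

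First, raising the hypothesis $|T(\ba;\C P)|\gg (P^m/\det C)^s P^{-K}$ to the power $2^{d-1}$ and invoking \eqref{weyl2} immediately yields
\begin{equation*}
    \Upsilon(\B j)\gg P^{-2^{d-1}K}\prod_{k=1}^d P_{j_k}^s.
\end{equation*}
Since $N_{\B j}(P_{j_1},\ldots,P_{j_{d-1}};P_{j_d})$ concerns the box $\prod_{k=1}^{d-1}P_{j_k}^s$, the task reduces to absorbing the extra factor $P_{j_d}^s$ at the cost of only $P^{\eps}$.

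To this end, write $\theta_n:=M(\B j)\alpha_{\B j}B_n(\hs)$ and dissect $\Upsilon(\B j)$ dyadically. For each $\B h$ and each $n\in\{1,\ldots,s\}$ let $T_n(\B h)$ be the unique dyadic integer in $[1,P_{j_d}]$ with $T_n\le\min(P_{j_d},\|\theta_n\|^{-1})<2T_n$. Since the number of possible tuples $(T_1,\ldots,T_s)$ is $\ll(\log P)^s\ll P^{\eps}$, pigeonholing yields a single tuple such that
\begin{equation*}
    \prod_{n=1}^{s}T_n\cdot\#\bigl\{\B h:|\B h_l|\ll P_{j_l},\;\|\theta_n\|\le T_n^{-1}\ \forall\,n\bigr\}\gg P^{-2^{d-1}K-\eps}\prod_{k=1}^d P_{j_k}^s.
\end{equation*}

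At this stage, Davenport's shrinking lemma compares this count, taken at the possibly loose thresholds $T_n^{-1}$, with the count at the uniform tighter threshold $P_{j_d}^{-1}$, introducing a loss of at most $\prod_n(P_{j_d}/T_n)$. Substituting and observing that the factor $\prod_n T_n$ cancels against $\prod_n P_{j_d}=P_{j_d}^s$ gives precisely
\begin{equation*}
    N_{\B j}(P_{j_1},\ldots,P_{j_{d-1}};P_{j_d})\gg P^{-2^{d-1}K-\eps}\prod_{k=1}^{d-1}P_{j_k}^s,
\end{equation*}
as required. The chief technical nuisance is this final shrinking step: the classical formulation of Davenport's lemma assumes a single common box for the variables, whereas here $\B h_1,\ldots,\B h_{d-1}$ occupy boxes of possibly very different sizes $P_{j_1},\ldots,P_{j_{d-1}}$. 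This is handled by iterating the lemma one coordinate block at a time, exploiting the $(d-1)$-linearity of $B_n(\hs)$ in the variables $\B h_1,\ldots,\B h_{d-1}$ and verifying that the accumulated logarithmic losses remain within the $P^{\eps}$ margin.
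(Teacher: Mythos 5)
Your first step---raising the hypothesis to the $2^{d-1}$-th power and feeding it into \eqref{weyl2} to obtain $\Upsilon(\B j)\gg P^{-2^{d-1}K}\prod_{k=1}^d P_{j_k}^s$---is exactly how the paper's proof begins, and your dyadic decomposition of $\Upsilon(\B j)$ is a harmless variant of the paper's subdivision of the unit cube into boxes of side $P_{j_d}^{-1}$. The gap lies in the step you yourself flag as the ``chief technical nuisance'': the comparison of the count at the loose thresholds $T_n^{-1}$ with $N_{\B j}(P_{j_1},\ldots,P_{j_{d-1}};P_{j_d})$ cannot be extracted from Davenport's shrinking lemma (Lemma~\ref{lem3:dav}). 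That lemma couples the box and the approximation threshold through the single scale $Z$ (it compares $|x_i|<AZ$, $\|L_i(\B x)\|<Z/A$ with $|x_i|<AZ'$, $\|L_i(\B x)\|<Z'/A$); it does not let you tighten the thresholds while keeping the box $|\B h_l|\ll P_{j_l}$ fixed, nor does it accommodate thresholds $T_n^{-1}$ that vary with $n$. Iterating it ``one coordinate block at a time'' does not repair this: each application necessarily shrinks the range of that block of $\B h$-variables, so you would end up bounding a count of the type $N_{\B j}$ with reduced box sizes (that is precisely the job of Lemma~\ref{lem3:geom}, later), and the attendant losses there are powers of $P$, not $P^{\eps}$; the claimed loss $\prod_n(P_{j_d}/T_n)$ is simply not what Lemma~\ref{lem3:dav} delivers.

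The inequality you need is true, but the correct tool is the pigeonhole/difference argument of Davenport's Lemma~13.2, which is what the paper uses: fix $\B h_2,\ldots,\B h_{d-1}$, so that your $\theta_n=M(\B j)\alpha_{\B j}B_n(\hs)$ are \emph{linear} in $\B h_1$ alone; cover the admissible set of fractional-part vectors $\{\|\theta_n\|\le T_n^{-1}\ (1\le n\le s)\}$ by $\ll\prod_n(P_{j_d}/T_n)$ boxes of side $P_{j_d}^{-1}$; if two admissible $\B h_1$ fall in the same box, their difference lies in a box of size $O(P_{j_1})$ and satisfies $\|\theta_n\|<P_{j_d}^{-1}$ for every $n$. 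This yields the loss $\prod_n(P_{j_d}/T_n)$ for each fixed $(\B h_2,\ldots,\B h_{d-1})$, and summing over those variables produces $N_{\B j}(P_{j_1},\ldots,P_{j_{d-1}};P_{j_d})$ up to constants in the box sizes, which the statement tolerates. Note that only linearity in the single block $\B h_1$ is needed, so the differing box sizes $P_{j_1},\ldots,P_{j_{d-1}}$ cause no difficulty at this stage. With this substitution your argument closes and is essentially the paper's; the paper even skips the dyadic step, bounding $\Upsilon(\B j)$ directly by $(P_{j_d}\log P_{j_d})^s\, N_{\B j}(P_{j_1},\ldots,P_{j_{d-1}};P_{j_d})$.
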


\begin{proof}
    Combining the hypothesis of the lemma with \eqref{weyl2}, we have
    \begin{align*}
        \left(\left(\frac{P^m}{\det C }\right)^{\hspace{-1mm} s} P^{-K}\right)^{\hspace{-1mm} 2^{d-1}}  & \ll \left(\frac{P^m}{\det C }\right)^{\hspace{-1mm} 2^{d-1}s} \Big(\prod_{k=1}^d P_{j_k}^{-s}\Big)  \Upsilon(\B j).
    \end{align*}
    Rearranging the terms, one obtains
    \begin{align}\label{ups>sth}
        \Upsilon(\B j) & \gg P^{-2^{d-1}K} \prod_{k=1}^d P_{j_k}^s.
    \end{align}
    Note that, unlike in the situation considered in \cite{FRF}, we obtain distinct estimates for different indices $\B j \in J$.

    Now for a fixed tuple $(\B h_2, \ldots, \B h_{d-1})$ write
    \begin{equation*}
        R(\B h_2, \ldots, \B h_{d-1}) = \card \{  \B h_1 \ll  P_{j_1} \! : \| M(\B j) \alpha_{\B j} B_n(\hs) \| < P_{j_d}^{-1} \quad (1 \le n \le s)\},
    \end{equation*}
    so that
    \begin{equation*}
        \dsum{\B h_l \ll P_{j_l}}{2 \le l \le d-1} R(\B h_2, \ldots, \B h_{d-1}) =N_{\B j}\left(P_{j_1}, \ldots, P_{j_{d-1}}; P_{j_d}\right)\hspace{-0.5mm}.
    \end{equation*}
    Then a familiar pigeonhole argument as in the proof of \cite[Lemma~13.2]{dav} implies that for any $s$-tuple of integers $(r_1, \dots, r_s)$ between $0$ and $P_{j_d}-1$ and any fixed $\B h_2, \dots, \B h_{d-1}$ the number of $\B h_1 \ll P_{j_1}$ satisfying
    \begin{equation*}
        \frac{r_n}{P_{j_d}} < \{M(\B j) \alpha_{\B j}B_{\B j,n}(\hs)\} < \frac{r_n+1}{P_{j_d}} \quad (1 \le n \le s)
    \end{equation*}
    is at most $ R(\B h_2, \ldots, \B h_{d-1})$, and thus
    \begin{align*}
         \Upsilon(\B j) &\ll \dsum{\B h_l \ll P_{j_l}}{2 \le l \le d-1} \prod_{n=1}^s \sum_{r_n=1}^{P_{j_d}-1} \min \left\{P_{j_d}, \frac{P_{j_d}}{r_n}, \frac{P_{j_d}}{r_n+1} \right\}\\
         &\ll (P_{j_d} \log P_{j_d})^s \dsum{\B h_l \ll P_{j_l}}{2 \le l \le d-1} R(\B h_2, \ldots, \B h_{d-1})\\
         & \ll (P_{j_d} \log P_{j_d})^s  N_{\B j}\left(P_{j_1}, \ldots, P_{j_{d-1}}; P_{j_d}\right)\hspace{-0.5mm}.
    \end{align*}
    Inserting this into \eqref{ups>sth} gives the desired result.
\end{proof}

We will need the following standard lemma.

\begin{lem}\label{lem3:dav}
    Let $L_1, \ldots, L_n$ be linear forms given by
    \begin{equation*}
        L_i = \lambda_{i,1}x_1 + \ldots + \lambda_{i,n}x_n \quad (1 \le i \le n)
    \end{equation*}
    with the additional symmetry that $\lambda_{i,j}=\lambda_{j,i}$. For a parameter $A>1$ let $U(Z)$ denote the number of integer solutions $x_1, \ldots, x_n$ to the system
    \begin{equation*}
        |x_i| < AZ \quad \hbox{and} \quad \| L_i(\B x)\| < Z/A \quad (1 \le i \le n).
    \end{equation*}
    Then for $0 < Z' \le Z \le 1$ we have
    \begin{equation*}
        \frac{U(Z)}{U(Z')} \ll \left( \frac{Z}{Z'}\right)^{\hspace{-1mm}n}\hspace{-0.5mm}.
    \end{equation*}
\end{lem}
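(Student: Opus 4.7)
The plan is to recast $U(Z)$ as a count of lattice points in a symmetric convex body of $\mathbb R^{2n}$ and then apply Minkowski's second theorem on successive minima, where the symmetry hypothesis $\lambda_{i,j}=\lambda_{j,i}$ will yield a pairing of successive minima that halves the naive bound.

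Concretely, I would introduce auxiliary integer variables $y_1,\ldots,y_n$ to encode the approximation conditions, writing $\|L_i(\B x)\|<Z/A$ as $|L_i(\B x)-y_i|<Z/A$ for some $y_i\in\mathbb Z$, and then define the symmetric convex body
\begin{equation*}
    B(Z)=\bigl\{(\B x,\B y)\in\mathbb R^{2n}:\ |x_i|<AZ,\ |L_i(\B x)-y_i|<Z/A\ (1\le i\le n)\bigr\}\hspace{-.5mm},
\end{equation*}
which satisfies $U(Z)=\card(B(Z)\cap\mathbb Z^{2n})$ and $B(Z)=Z\cdot B(1)$. Writing $\mu_1\le\cdots\le\mu_{2n}$ for the successive minima of $B(1)$ with respect to $\mathbb Z^{2n}$, a standard consequence of Minkowski's theorem gives $U(Z)\asymp\prod_{i=1}^{2n}\max(1,Z/\mu_i)$, whence
\begin{equation*}
    \frac{U(Z)}{U(Z')}\asymp\prod_{i=1}^{2n}\frac{\max(1,Z/\mu_i)}{\max(1,Z'/\mu_i)}.
\end{equation*}
Each factor is trivially bounded by $Z/Z'$, giving the naive estimate $(Z/Z')^{2n}$.

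To sharpen this to $(Z/Z')^n$ I would establish the symmetry-induced pairing $\mu_i\,\mu_{2n+1-i}\asymp 1$ for $1\le i\le n$. This rests on two ingredients: first, Mahler's duality for successive minima, which states that $\mu_i(B(1))\,\mu_{2n+1-i}(B(1)^\ast)\asymp 1$; second, the observation that when the coefficient matrix of $L$ is symmetric, the polar body $B(1)^\ast$ is comparable---up to constants depending only on $n$---to $B(1)$ itself, after the unimodular change of coordinates $(\B u,\B v)\mapsto(-\B v,\B u)$ that preserves $\mathbb Z^{2n}$. With the pairing in hand, whenever $\mu_i\le Z\le 1$ the partner satisfies $\mu_{2n+1-i}\gg 1\ge Z$ and contributes only a trivial factor to the product; at most $n$ of the $2n$ factors can therefore exceed $1$, and the claim follows.

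The main obstacle is the polarity calculation. The polar body $B(1)^\ast$ is naturally described by an $\ell^1$-type constraint coupling $\B u+L^T\B v$ with $\B v$, and one must verify carefully that the symmetry $L^T=L$ is exactly what is needed to sandwich this constraint---after the unimodular swap and sign change indicated above---between dilates of the $\ell^\infty$-shaped body $B(1)$ with implied constants depending only on $n$. Once this comparison is in place, everything else is a routine application of the geometry of numbers.
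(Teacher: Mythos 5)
You are not being compared against a written-out proof here: the paper disposes of this lemma with a citation to Davenport (Lemma 12.6 of his book), and your argument is essentially the classical proof behind that citation---Davenport counts points of the determinant-one lattice in $\mathbb{R}^{2n}$ generated by the forms $A^{-1}x_i$ and $A(L_i(\mathbf{x})-y_i)$ by successive minima, and uses the symmetry $\lambda_{i,j}=\lambda_{j,i}$ to show that the reciprocal lattice is an integral unimodular image of the original, which is precisely your polar-body computation with the swap $(\mathbf{u},\mathbf{v})\mapsto(-\mathbf{v},\mathbf{u})$ and yields the pairing $\mu_i\,\mu_{2n+1-i}\gg 1$ of the minima. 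Your outline is correct; the only points to note are that you need just the elementary direction $\mu_i(B)\,\mu_{2n+1-i}(B^{\ast})\ge 1$ of Mahler duality rather than the full two-sided comparison, and that since the $\ell^1$/$\ell^\infty$ comparison of $B^{\ast}$ with the swapped body loses a factor $2n$, the ``partner'' minima are only $\gg_n 1$, so the corresponding factors in the product are $O_n(1)$ rather than exactly $1$---harmless, as the implied constant in the lemma may depend on $n$.
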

\begin{proof}
    This is \cite[Lemma~12.6]{dav}.
\end{proof}

The strategy is now to apply Lemma~\ref{lem3:dav} to each of the variables $\B h_k$ in such a way that $AZ=P_{j_k}$ and $AZ' = P^{\theta}$ for some small exponent $\theta$, so that in the further course of the argument we can assume the variables to lie in small boxes which are then independent of $C$. However, this is legitimate only in the case when $Z' \le Z$, so we need $(AZ')/(AZ) = P^\theta/P_{j_k} \le 1$ for all $k$. This condition amounts to $P^\theta \le P/\gamma_{\mathrm{max}}$ or, taking logarithms,
\begin{align*}
    \theta \le 1- \frac{\log \gamma_{\mathrm{max}}}{\log P}.
\end{align*}
For simplicity we write
\begin{align}\label{eta-def}
    \eta =\log \gamma_{\mathrm{max}}/\log P.
\end{align}
Notice that in the case when $C$ is the identity matrix we have $\eta=0$ and therefore $\theta \le 1$ as usual. 

\begin{lem}\label{lem3:geom}
    Suppose that $0 < \theta \le 1-\eta$, where $\eta$ is as in \eqref{eta-def}. Then under the hypothesis of Lemma~\ref{lem3:pre-geom} one has
    \begin{equation*}
        N_{\B j}\left(P^{\theta}, \ldots, P^{\theta}; \frac{P^{d-(d-1)\theta}}{\hgam}\right) \gg P^{(d-1)s\theta-2^{d-1}K-\eps}.
    \end{equation*}
\end{lem}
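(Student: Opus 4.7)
The plan is to iterate Davenport's shrinking lemma (Lemma~\ref{lem3:dav}) once for each of the differencing variables $\B h_1, \ldots, \B h_{d-1}$, shrinking each range from $P_{j_k}$ down to the uniform value $P^{\theta}$, and then to combine the resulting bound with Lemma~\ref{lem3:pre-geom}. As a first step I reorder the indices within $\B j$ so that $j_d$ is maximal, which is legitimate because $\B j \in J$ is an unordered multi-index; this choice guarantees $P_{j_k} \le P_{j_d}$ for all $k \le d-1$, which will be needed to verify the admissibility hypothesis $Z \le 1$ in Davenport's lemma.

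At the $k$-th step I fix all $\B h_l$ with $l \neq k$ and treat each $L_n = M(\B j)\alpha_{\B j} B_n(\hs)$ as a linear form in the $s$ components of $\B h_k$. The symmetry required by Lemma~\ref{lem3:dav} holds automatically because $\Phi$ is a symmetric $d$-linear form: expanding $B_n$ in coordinates shows that the coefficient of $h_{k,n_k}$ in $L_n$ is invariant under swapping $n$ and $n_k$. Choosing the Davenport parameters so that $AZ = Z_k$ equals the current range of $\B h_k$ and $Z/A = 1/Y$ equals the current reciprocal approximation threshold, shrinking to $AZ' = P^{\theta}$ produces a new threshold $Z'/A = P^{\theta}/(Z_k Y)$ at the cost of a factor $(P^\theta/Z_k)^s$. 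Passing this pointwise estimate through the summation over the remaining $\B h_l$ yields the recursion
\begin{equation*}
    N_{\B j}(P^\theta, Z_2, \ldots, Z_{d-1}; Z_1 Y/P^\theta) \gg (P^\theta/Z_1)^s\, N_{\B j}(Z_1, Z_2, \ldots, Z_{d-1}; Y),
\end{equation*}
and analogously at each subsequent step.

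After iterating $d-1$ times from the initial values $Z_k = P_{j_k}$ and $Y = P_{j_d}$, the accumulated loss is $\prod_{k=1}^{d-1}(P^{\theta}/P_{j_k})^s$, while the final approximation threshold evaluates via $\prod_{k=1}^d P_{j_k} = P^d/\hgam$ to $P^{d-(d-1)\theta}/\hgam$, which is exactly the threshold in the claim. Substituting the lower bound $N_{\B j}(P_{j_1}, \ldots, P_{j_{d-1}}; P_{j_d}) \gg P^{-2^{d-1}K - \eps}\prod_{k=1}^{d-1} P_{j_k}^s$ supplied by Lemma~\ref{lem3:pre-geom} collapses the resulting expression to $P^{(d-1)s\theta - 2^{d-1}K - \eps}$, as required.

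The main technical point will be the bookkeeping required to verify Davenport's admissibility conditions ($A > 1$ and $Z \le 1$) at every step. The condition $Z \le 1$ reduces at step $k$ to the requirement $P_{j_k}/P_{j_d} \le \prod_{l<k} P_{j_l}/P^{(k-1)\theta}$; the left-hand side is $\le 1$ by our ordering of $\B j$, while the hypothesis $\theta \le 1 - \eta$ ensures $P_{j_l} \ge P^{1-\eta} \ge P^\theta$ for every $l$, so that each factor $P_{j_l}/P^{\theta}$ on the right is $\ge 1$ and the inequality is preserved throughout the iteration. This is precisely where $\eta$ enters the argument and explains the appearance of the constraint on $\theta$ in the statement.
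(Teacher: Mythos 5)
Your proposal is correct and follows essentially the same route as the paper: iterate Davenport's shrinking lemma once per differencing variable with the range/threshold pair chosen so that $AZ$ is the current box size and $Z/A$ the current approximation threshold, order the multi-index so that $P_{j_d}$ is maximal, use $\theta \le 1-\eta$ (i.e.\ $P^{\theta} \le P_{j_l}$ for all $l$) to verify $Z' \le Z \le 1$, and feed in Lemma~\ref{lem3:pre-geom} at the end. The paper's proof is the same argument written out with explicit recursive parameters $A_k, Z_k, Z_k'$ and the auxiliary counts $\nu(k)$, and your bookkeeping of the final threshold $\prod_k P_{j_k}/P^{(d-1)\theta} = P^{d-(d-1)\theta}/\hgam$ and of the loss $\prod_k (P^{\theta}/P_{j_k})^s$ reproduces its conclusion exactly.
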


\begin{proof}
    This follows from Lemma~\ref{lem3:dav} by what is essentially a standard argument. We may assume without loss of generality that $\gamma_i \ge \gamma_j$ for $i<j$ and that the components of every multi-index $\B j \in J$ are arranged in ascending order, so that $\gamma_{j_1} \ge \gamma_{j_2} \ge \dots \ge \gamma_{j_d}$ for all $\B j \in J$. For fixed $\B h_1, \ldots \B h_{k-1}, \B h_{k+1}, \ldots , \B h_{d-1}$ and fixed $A>1$ let $U_k(Z)$ denote the number of $\B h_k < A Z$ such that $\|M(\B j) \alpha_{\B j} B_n(\hs) \| <Z/A$. We will take
    \begin{align*}
        A_1 &= \frac{P}{\sqrt{\gamma_{j_1}\gamma_{j_d}}}, &  Z_1 &= \sqrt{\frac{\gamma_{j_d}}{\gamma_{j_1}}}, &   Z'_1 &= \frac{\sqrt{\gamma_{j_d}\gamma_{j_1}}}{P^{1-\theta}},
    \end{align*}
    and then recursively
    \begin{align*}
        A_k &= A_{k-1}\sqrt{\frac{P^{1-\theta}}{\gamma_{j_k}}}, &  Z_k &= \frac{\gamma_{j_{k-1}}Z_{k-1}}{\sqrt{P^{1-\theta}\gamma_{j_k}}}, &  Z'_k &= \frac{\gamma_{j_k} Z_k}{P^{1-\theta}},
    \end{align*}
    so that the relations
    \begin{align*}
        A_kZ_k&=P/\gamma_{j_k}, &  Z_k/A_k &=P^{-k+(k-1)\theta}\gamma_{j_1} \cdot \ldots \cdot \gamma_{j_{k-1}}\gamma_{j_d},\\
        A_k Z'_k&=P^{\theta}, &  Z'_{k}/A_{k} &= P^{-(k+1)+k\theta}\gamma_{j_1} \cdot \ldots \cdot \gamma_{j_{k}}\gamma_{j_d},\\
        Z_k/Z_k' & = P^{1-\theta}/\gamma_{j_k} &&
    \end{align*}
    are satisfied for each $1 \le k \le d-1$. Observe that our hypothesis on $\theta$ ensures that $Z_k \ge Z_k'$. Furthermore, one easily confirms that, according to our assumptions on the relative sizes of the $\gamma_{j_k}$, one has   
    $$Z_k = \sqrt{\frac{\gamma_{j_d}}{\gamma_{j_k}}} \prod_{l=1}^{k-1} \sqrt \frac{\gamma_{j_l}}{P^{1-\theta}} \le 1.$$
    It follows that Lemma~\ref{lem3:dav} is applicable and yields
    \begin{equation}\label{shortening}
        U_k(Z_k) \ll \left( \frac{Z_k}{Z_k'}\right)^{\hspace{-1mm} s}U_k\left(Z_k'\right) \ll (P^{1-\theta}/\gamma_{j_k})^{s} U_k\left(Z_k'\right)\hspace{-.5mm}.
    \end{equation}
    For a given $k$ between $0$ and $d-1$ consider the quantity
     \begin{align}\label{N(P,P)-est}
          \nu(k)= N_{\B j}\Bigg(\underbrace{P^{\theta}, \ldots, P^{\theta}}_{\text{first $k$ entries}}, \frac{P}{\gamma_{k+1}}, \ldots, \frac{P}{\gamma_{d-1}}; \frac{P^{(k+1)-k\theta}}{\gamma_{j_1}\cdot\ldots\cdot\gamma_{j_k}\gamma_{j_d}}\Bigg),
    \end{align}
   then $\nu(k)$ can be expressed in terms of $U_k(Z_k')$ or $U_{k+1}(Z_{k+1})$ by the relation
    \begin{align}\label{nu-def}
        \nu(k) &= \dsum{\B h_l \ll P^{\theta}}{1\le l \le k-1} \dsum{\B h_l \ll P/\gamma_{j_l}}{k+1\le l \le d-1} U_k\left(Z'_{k}\right)= \dsum{\B h_l \ll P^{\theta}}{1\le l \le k} \dsum{\B h_l \ll P/\gamma_{j_l}}{k+2\le l \le d-1} U_{k+1}\left(Z_{k+1}\right)\hspace{-.5mm}.
    \end{align}
    On combining \eqref{shortening} and \eqref{nu-def} we obtain a recursive relation for the $\nu(k)$ which is given by $\nu(k-1)\ll  (P^{1-\theta}/\gamma_{j_k})^{s} \nu(k)$. It follows that 
    \begin{align*}
        \nu(d-1)\gg P^{-(1-\theta)(d-1)s} \Big(\prod_{k=1}^{d-1}\gamma_{j_k}^{s}\Big)\nu(0),
    \end{align*}
    and on recalling the definition \eqref{N(P,P)-est} of $\nu(k)$ we find
    \begin{align*}
        N_{\B j}\left(P^{\theta}, \ldots, P^{\theta}; \frac{P^{d-(d-1)\theta}}{\hgam}\right)\gg P^{-(d-1)(1-\theta)s}\Big(\prod_{k=1}^{d-1}\gamma_{j_k}^{s}\Big)N_{\B j}\left(P_{j_1}, \ldots, P_{j_{d-1}}; P_{j_d}\right).
		\end{align*}		
		Inserting Lemma~\ref{lem3:pre-geom} now yields the desired result. 
\end{proof}

The content of Lemma~\ref{lem3:geom} is that if the exponential sum is large, the quantities $ M(\B j) \alpha_{\B j} B_n(\hs)$ are simultaneously close to an integer for many choices of $\hs$. This is certainly true if the forms $B_n$ tend to vanish for geometric reasons, and in the other case it implies that one can find genuine (i.e. non-zero) solutions to the diophantine approximation problem that is implicit in \eqref{def-N}. This yields the standard threefold case distinction.

\begin{lem}\label{lem3:ram1}
    Let $0 < \theta \le 1-\eta$ and $k>0$ be parameters, and let $\ba \in [0,1)^{r}$. Then one of the following is true.
    \begin{enumerate}[(A)]
    \item
        The exponential sum $ T(\ba; \C P)$ is bounded by
        \begin{equation*}
            | T(\ba; \C P)| \ll \left(\frac{P^m}{\det C }\right)^{\hspace{-1mm} s} P^{-k\theta}.
        \end{equation*}
    \item
        For every $\B j \in J$ one finds integers $(q_{\B j}, a_{\B j}) $ satisfying
        \begin{align*}
            0<q_{\B j} \ll P^{(d-1)\theta}  \qquad \hbox{and} \qquad       \left|\alpha_{\B j} q_{\B j} - a_{\B j}\right| \ll   P^{-d +(d-1)\theta} \hgam.
        \end{align*}
    \item 
        The number of  $|\B h_l| \le P^{\theta}$ for $1 \le l \le d-1$ that satisfy
        \begin{equation*}
            B_n(\B h_1, \ldots, \B h_{d-1}) = 0 \qquad (1 \le n \le s)
        \end{equation*}
        is asymptotically greater than $(P^\theta)^{(d-1)s - 2^{d-1}k - \eps}$.
    \end{enumerate}
\end{lem}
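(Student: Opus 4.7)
The lemma is the familiar Weyl trichotomy drawn directly from Lemma~\ref{lem3:geom}, refined here by a case distinction between a vanishing and a non-vanishing Weyl minor. My strategy is to assume that cases (A) and (C) both fail and derive case (B) uniformly in $\B j$.

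Suppose first that $|T(\ba; \C P)| \gg (P^m/\det C)^s P^{-k\theta}$, violating (A). Applying Lemma~\ref{lem3:geom} with $K = k\theta$ furnishes, for every $\B j \in J$, a set $\C S_{\B j}$ consisting of tuples $\hs$ that satisfy $|\B h_l| \le P^\theta$ for all $1 \le l \le d-1$ together with $\|M(\B j)\alpha_{\B j} B_n(\hs)\| < \hgam P^{(d-1)\theta - d}$ for every $1 \le n \le s$, of cardinality $\gg (P^\theta)^{(d-1)s - 2^{d-1}k - \eps}$. Let $\C S_{\B j}^{\circ} \subseteq \C S_{\B j}$ denote the sub-collection on which additionally $B_n(\hs) = 0$ simultaneously for every $n$. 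This $\C S_{\B j}^{\circ}$ is precisely the set featured in (C), so if the bound in (C) were satisfied we would already be done. Otherwise $|\C S_{\B j}^{\circ}| < |\C S_{\B j}|$, and we may select an element $\hs \in \C S_{\B j} \setminus \C S_{\B j}^{\circ}$ for which at least one $B_n(\hs)$ is non-zero.

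Given such $\hs$ and $n$, set $q_{\B j} = |M(\B j) B_n(\hs)|$, a positive integer, and let $a_{\B j}$ denote the sign-adjusted nearest integer to $M(\B j)\alpha_{\B j} B_n(\hs)$. Because $B_n$ is $(d-1)$-linear in arguments of magnitude at most $P^\theta$ and $M(\B j)$ is a bounded non-zero combinatorial factor, one has $q_{\B j} \ll P^{(d-1)\theta}$; while
\begin{equation*}
  |\alpha_{\B j} q_{\B j} - a_{\B j}| = \|M(\B j)\alpha_{\B j} B_n(\hs)\| \ll \hgam P^{(d-1)\theta - d}
\end{equation*}
follows immediately from membership of $\hs$ in $\C S_{\B j}$. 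Running this argument for each $\B j \in J$ yields (B) uniformly, completing the trichotomy.

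The main technical content has already been packed into Lemma~\ref{lem3:geom}, which itself rests on the Weyl differencing of Lemma~\ref{lem3:weyl1} combined with the diophantine rescaling of Lemma~\ref{lem3:dav}. The only subtlety worth flagging at this stage is bookkeeping: in contrast to the uniform-box setting of \cite{FRF}, the approximation quality in (B) retains the $\B j$-dependent factor $\hgam$, reflecting the differing scales $\gamma_{j_k}$ that re-enter after rescaling to the diagonal box of side $P^\theta$. Once this is kept straight, the extraction of the pair $(q_{\B j}, a_{\B j})$ from a single non-vanishing $B_n(\hs)$ is routine.
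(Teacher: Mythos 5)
Your proposal is correct and follows essentially the same route as the paper: assume (A) fails, invoke Lemma~\ref{lem3:geom} with $K=k\theta$ to get many $\hs$ in the box $|\B h_l|\le P^\theta$ with $\|M(\B j)\alpha_{\B j}B_n(\hs)\|<\hgam P^{-d+(d-1)\theta}$, and then either all such $\hs$ have $B_n(\hs)=0$ for all $n$ (case (C)) or a non-vanishing $B_n(\hs)$ yields the denominator $q_{\B j}\ll P^{(d-1)\theta}$ and the approximation in (B). Your bookkeeping of the $\B j$-dependent factor $\hgam$ and the choice $q_{\B j}=|M(\B j)B_n(\hs)|$ match the paper's argument (up to the harmless sign adjustment), so there is nothing to correct.
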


\begin{proof}
    The proof is similar to that of \cite[Lemma~3.4]{FRF}. Assuming that the estimate in (A) does not hold, Lemma~\ref{lem3:geom} implies that for every $\B j \in J$ we have
    \begin{equation*}
        \big\| M(\B j) \alpha_{\B j} B_n(\hs_{\B j}) \big\| < P^{-d+(d-1)\theta} \hgam \qquad (1 \le n \le s)
    \end{equation*}
    for  at least $P^{(d-1)s\theta-2^{d-1}k\theta - \eps}$ choices of $\hs_{\B j} \le P^{\theta}$. If $B_n(\hs_{\B j})$ is non-zero for some $n$ and some $\hs_{\B j}$ counted by $N_{\B j}(P^{\theta}, \dots, P^{\theta};P^{d-(d-1)\theta}/\hgam)$, we denote its value by $q_{\B j}$. Obviously $q_{\B j} \ll P^{(d-1)\theta}$, and it follows that we can find an integer $a_{\B j}$ with the property that
    \begin{equation*}
          |\alpha_{\B j} q_{\B j}   -  a_{\B j}|  \ll P^{-d+(d-1)\theta}\hgam.
    \end{equation*}
    This establishes the statement.
\end{proof}

By choosing the number of variables large enough, the singular case can be excluded. This is, however, identical to the treatment in \cite[Lemma~3.5]{FRF}.

\begin{lem}\label{lem3:ram-final}
    Let $\ba \in [0,1)^{r}$ and let $0 < \theta \le 1-\eta$ and $k$ be parameters with
        \begin{equation}\label{s>k}
                s - \dim \sing F > 2^{d-1}k.
        \end{equation}
    Then the alternatives are the following.
    \begin{enumerate} [(A)]
        \item \label{case3:minor}
            The exponential sum $ T(\ba; \C P)$ is bounded by
            \begin{equation*}
                | T(\ba; \C P)| \ll \left(\frac{P^m}{\det C }\right)^{\hspace{-1mm} s} P^{-k\theta+\eps}.
            \end{equation*}
        \item  \label{case3:major}
            For every $\B j \in J$ one finds a pair of coprime integers $(q_{\B j}, a_{\B j}) $ satisfying
            \begin{align*}
                0< q_{\B j} &\ll P^{(d-1)\theta}  \qquad \hbox{and} \qquad   \left|\alpha_{\B j} q_{\B j} - a_{\B j}\right| \ll   P^{-d+(d-1)\theta}\hgam.
            \end{align*}
    \end{enumerate}
\end{lem}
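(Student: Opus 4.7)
The plan is to invoke Lemma~\ref{lem3:ram1} to handle alternatives (A) and (B), and then to rule out the singular alternative (C) by means of the hypothesis \eqref{s>k} on the codimension of the singular locus of $F$.

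First I would apply Lemma~\ref{lem3:ram1} with the chosen parameters $\theta$ and $k$; this presents three possibilities. If alternative (A) of that lemma holds, we are already in case (\ref{case3:minor}) of the present lemma, up to an admissible power of $P^{\eps}$. If alternative (B) of that lemma holds, for each $\B j \in J$ we obtain integers $q_{\B j}$ and $a_{\B j}$ satisfying the required inequalities but not necessarily coprime. Setting $g_{\B j} = (q_{\B j}, a_{\B j})$ and replacing the pair by $(q_{\B j}/g_{\B j}, a_{\B j}/g_{\B j})$ yields a coprime pair with denominator bounded by $q_{\B j}$ and approximation error $|\alpha_{\B j}(q_{\B j}/g_{\B j}) - a_{\B j}/g_{\B j}| \le |\alpha_{\B j} q_{\B j} - a_{\B j}|$, so all the required bounds for case (\ref{case3:major}) are preserved (in fact improved).

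The real work is to eliminate alternative (C) of Lemma~\ref{lem3:ram1}. Suppose for contradiction that (C) holds, so that the number of tuples $(\B h_1, \ldots, \B h_{d-1})$ with $|\B h_l| \le P^{\theta}$ on which the $(d-1)$-linear forms $B_1, \ldots, B_s$ all vanish is at least $(P^{\theta})^{(d-1)s - 2^{d-1}k - \eps}$. Because the forms $B_n$ arise from the symmetric $d$-linear form $\Phi$ associated to $F$ through the relation $\Phi(\B x, \hs) = \sum_n B_n(\hs) x_n$, the simultaneous vanishing locus of $B_1, \ldots, B_s$ is precisely the set of tuples $\hs$ for which the linear form $\B x \mapsto \Phi(\B x, \hs)$ is identically zero, which ties directly to the singular locus of the variety $F = 0$. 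A standard geometric argument going back to Birch \cite{birch}, carried out in essentially the same form in the preceding work \cite{FRF}, shows that the number of integer points of this variety in a box of side length $P^{\theta}$ is $O\bigl((P^{\theta})^{(d-1)s - (s - \dim \sing F) + \eps}\bigr)$. Comparing the two bounds yields $s - \dim \sing F \le 2^{d-1}k + \eps$, contradicting \eqref{s>k} once $\eps$ is chosen small enough. Hence alternative (C) cannot occur, which completes the argument.

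The main technical obstacle is the last step, namely controlling the number of lattice points on the vanishing locus of the $B_n$ in terms of $\dim \sing F$. Since the forms $B_n$ depend on $\hs = (\B h_1, \ldots, \B h_{d-1})$ comprising $(d-1)s$ variables rather than on a single vector, one must unpack Birch's dimension estimate carefully to verify that the codimension of the common zero locus inherits the full codimension $s - \dim \sing F$ of the singular locus of $F$; this is the point where the factor $2^{d-1}$ in \eqref{s>k} is calibrated precisely to beat the exponent $(d-1)s - 2^{d-1}k$ coming from (C). With that geometric input in place, the rest of the argument is essentially bookkeeping.
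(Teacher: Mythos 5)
Your proposal is correct and follows essentially the same route as the paper: the paper's proof simply defers to the argument deriving Lemma~3.5 from Lemma~3.4 in \cite{FRF}, i.e.\ the Birch-style dimension count that excludes the singular alternative of Lemma~\ref{lem3:ram1} under \eqref{s>k}, which is exactly what you reconstruct (together with the routine reduction to coprime pairs in case~(\ref{case3:major})). The only step you leave implicit, bounding the dimension of the common zero locus of the $B_n$ by $(d-2)s+\dim\sing F$ via restriction to the diagonal $\B h_1=\dots=\B h_{d-1}$, is precisely the ingredient the paper also outsources to \cite{birch} and \cite{FRF}, so nothing essential is missing.
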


\begin{proof}
    This is essentially Lemma~3.5 of \cite{FRF}. Notice that the singular case in Lemma~\ref{lem3:ram1} is the same as that in Lemma~3.4 of our former work, and in particular does not depend on the matrix $C$ or indeed on linear spaces altogether, so the methods used to derive Lemma~3.5 from Lemma~3.4 of \cite{FRF} are applicable, and the singular case is excluded by our assumption \eqref{s>k}.
\end{proof}

\section{Implementation of the circle method}

Let $c$ be sufficiently large in terms of $C$ and the coefficients of $F$, then we write $\F M(P,\theta)$ for the set of all $\ba \in [0,1)^{r}$ that have a rational approximation satisfying
\begin{align*}
    0 \le a_{\B j} <q_{\B j} \le c P^{(d-1)\theta}   \qquad \hbox{and}  \qquad \left|\alpha_{\B j} q_{\B j} - a_{\B j}\right| \le c P^{-d+(d-1)\theta} \hgam
\end{align*}
for all $\B j \in J$, and $\F m(P,\theta)$ for the complement thereof. Notice that this respects the case distinction of Lemma~\ref{lem3:ram-final}, so there is a non-trivial minor arcs estimate for all $\ba \in \F m(P, \theta)$. In order to keep notation simple, we omit the parameter $P$ whenever there is no danger of confusion.

\begin{lem} \label{lem3:pruning}
    Suppose the parameters $k$ and $\theta$ satisfy
    \begin{align*}
        0 < \theta  < \theta_0 = \min \left\lbrace \frac{d}{2(d-1)}, 1-\eta \right\rbrace
    \end{align*}
    and
    \begin{equation}\label{k-pruning}
         k >  \max \left\lbrace 2r(d-1), rd\frac{\sum_{i=1}^m \log P_i}{m \log P_{\mathrm{min}}} \right\rbrace \hspace{-0.5mm}.
    \end{equation}
    Then there exists a $\delta>0$ such that the minor arcs contribution is bounded by
    \begin{equation*}
        \int_{\F m(\theta)} | T(\ba; \C P)| \D \ba = O\left(  \left(\frac{P^m}{\det C }\right)^{\hspace{-1mm} s-rd/m}P^{-\delta} \right)\hspace{-0.5mm}.
    \end{equation*}
\end{lem}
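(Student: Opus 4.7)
The argument is a direct application of Lemma~\ref{lem3:ram-final} combined with the trivial volume bound $\vol(\F m(\theta))\le 1$.

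First, the hypothesis $s-\dim\sing F>2^{d-1}k$ required by Lemma~\ref{lem3:ram-final} is inherited from the assumption on $s$ in Theorem~\ref{thm:rep} together with \eqref{k-pruning}, so that the lemma is applicable. By construction of $\F m(\theta)$ as the complement of $\F M(\theta)$, every $\ba\in\F m(\theta)$ fails the approximation condition (B) of Lemma~\ref{lem3:ram-final}, forcing alternative (A) to hold uniformly. This yields the pointwise bound
\begin{equation*}
    |T(\ba;\C P)|\ll\Big(\prod_{i=1}^m P_i\Big)^{\!s}P^{-k\theta+\eps}\qquad(\ba\in\F m(\theta)),
\end{equation*}
and a trivial integration gives
\begin{equation*}
    \int_{\F m(\theta)}|T(\ba;\C P)|\,\D\ba\ll\Big(\prod_{i=1}^m P_i\Big)^{\!s}P^{-k\theta+\eps}.
\end{equation*}

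Upon writing $\prod_{i}P_i=P^{(\sum_i\log P_i)/\log P}$, the target bound reduces to exhibiting a choice of $\theta<\theta_0$ together with a $\delta>0$ such that
\begin{equation*}
    k\theta>\frac{rd\sum_{i=1}^m\log P_i}{m\log P}+\eps+\delta.
\end{equation*}
I would produce such a $\theta$ by splitting into the two cases that determine the minimum in the definition of $\theta_0$. If $d/(2(d-1))\le 1-\eta$, choosing $\theta$ just below $d/(2(d-1))$ and invoking $k>2r(d-1)$ yields $k\theta$ close to but exceeding $rd$, which dominates $rd\sum_i\log P_i/(m\log P)$ since each $P_i\le P$. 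If instead $1-\eta<d/(2(d-1))$, choosing $\theta$ just below $1-\eta$ and exploiting $(1-\eta)\log P=\log P_{\min}$ converts the strict inequality $k>rd\sum_i\log P_i/(m\log P_{\min})$ directly into the required bound upon multiplying through by $\log P_{\min}/\log P$. In both instances the strictness in \eqref{k-pruning} leaves room for a positive $\delta$.

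The only genuine obstacle is this two-case bookkeeping: balancing the two admissible ranges of $\theta$ against the corresponding terms in the maximum defining \eqref{k-pruning} so that a uniform $\delta>0$ survives after absorbing the $\eps$. The analytic content of the estimate, however, is entirely packaged in Lemma~\ref{lem3:ram-final}, and no further Weyl differencing or pruning iteration is needed.
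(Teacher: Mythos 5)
There is a genuine gap, and it lies precisely in the step you dismiss at the end: the pruning iteration is not optional here. In the lemma, $\theta$ is a \emph{given} parameter, arbitrary in $(0,\theta_0)$, and the bound must hold for that $\theta$; it is not at your disposal to choose. This matters because in the subsequent application the paper fixes $\theta \le (1-\eta-\delta)/\bigl((d-1)r(2r+3)\bigr)$, a very small value forced by the major-arc error term in Lemma~\ref{lem3:generror}, and it is for this small $\theta$ that the minor-arc estimate is needed. Your argument --- alternative (A) of Lemma~\ref{lem3:ram-final} at level $\theta$ plus the trivial bound $\vol \F m(\theta)\le 1$ --- requires $k\theta > rd\,\frac{\sum_i\log P_i}{m\log P}$, and with $k$ only slightly larger than the maximum in \eqref{k-pruning} this forces $\theta$ to be near $\theta_0$. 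For the small $\theta$ actually used, $k\theta$ is of size roughly $1/r$, far below the needed exponent, so the trivial integration loses a large power of $P$. What you have really proved is ``there exists some $\theta<\theta_0$ for which the minor arcs at level $\theta$ are negligible,'' which cannot be combined with the major-arc analysis, since that analysis needs the \emph{same} small $\theta$ on both sides.

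The missing idea is Birch's pruning argument, which is exactly how the paper proceeds. One first disposes of $\F m(\theta_0)$ by the computation you carried out (your two-case calculation with $k\theta_0$ is essentially the paper's first step and is fine). Then one interpolates a finite sequence $\theta_0>\theta_1>\cdots>\theta_M=\theta$ with $(\theta_{i-1}-\theta_i)k<\delta$ and $M=O(1)$, writes $\F m(\theta)\setminus\F m(\theta_0)$ as the union of the annuli $\F m_i=\F M(\theta_{i-1})\setminus\F M(\theta_i)$, and on each annulus combines the \emph{sup} bound from alternative (A) at level $\theta_i$ with the \emph{volume} bound
\begin{equation*}
    \vol \F M(\theta_{i-1}) \ll \left(\frac{P^m}{\det C}\right)^{\hspace{-1mm}-rd/m} P^{2(d-1)r\theta_{i-1}},
\end{equation*}
so that the exponent $-k\theta_i+2r(d-1)\theta_{i-1}$ is $<-\delta$ thanks to $k>2r(d-1)$ and the fine spacing of the $\theta_i$. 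Without this decomposition and the major-arc volume estimate, the claimed bound for small $\theta$ does not follow from Lemma~\ref{lem3:ram-final} alone.
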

\begin{proof}
    We follow the proof of \cite[Lemma~4.4]{birch}. As a first step, we note that     
    \begin{align*}
	\left(\frac{P^m}{\det C }\right)^{\hspace{-1mm} -rd/m} = P^{- rd(1-\Omega)},
    \end{align*}
    where $\Omega =  \frac1m \sum_{i=1}^m \frac{\log \gamma_i}{\log P}$. It follows that $P^{-k \theta_0} \ll  \left(P^m/\det C \right)^{-rd/m-\delta}$ as soon as either one has $\theta_0 = \frac{d}{2(d-1)}$ and $k > 2r(d-1)$, or $\theta_0 = 1-\eta$ and 
    \begin{align*}
	k&> rd\frac{1-\Omega}{1-\eta}= rd\frac{m \log P - \sum_{i=1}^m \log \gamma_i}{ m(\log P - \log \gamma_{\mathrm{max}})} = rd\frac{\sum_{i=1}^m \log P_i}{m \log P_{\mathrm{min}}}.
    \end{align*}
    It follows that for some $\delta>0$ the contribution arising from $\F m(\theta_0)$ is given by  
    \begin{equation*}
        \int_{\F m(\theta_0)} | T(\ba; \C P)| \D \ba \ll \left(\frac{P^m}{\det C }\right)^{\hspace{-1mm} s-rd/m}P^{-\delta}.
    \end{equation*}
    
    Given $0 < \theta < \theta_0$, we can find $\delta>0$ and $1 \ge \theta_0 > \theta_1 > \ldots > \theta_M = \theta >0$ satisfying
    \begin{equation} \label{delta}
         (k-2r(d-1))\theta >2\delta \quad \text{and} \quad (\theta_i-\theta_{i+1})k<\delta \quad \text{for all $i$}.
    \end{equation}
    This is always possible with $M=O(1)$.
    Then on writing
    \begin{equation*}
        \F m_i = \F m(\theta_i) \setminus \F m(\theta_{i-1}) = \F M(\theta_{i-1}) \setminus \F M(\theta_{i})
    \end{equation*}
    a straightforward calculation shows that     
		\begin{equation*}
        \vol \F m_i  \le \vol \F M(\theta_{i-1}) \ll \left(\frac{P^m}{\det C }\right)^{\hspace{-1mm} -rd/m}P^{2(d-1)r\theta_{i-1} }.
    \end{equation*}
    Recall that for $\ba \in \F m(\theta_i)$ we are in the situation of case \eqref{case3:minor} in Lemma~\ref{lem3:ram-final}, so the $i$-th minor arcs contribution is bounded by
    \begin{align*}
        \int_{\F m_i} \left| T(\ba; \C P) \right| \D  \ba         &\ll  \vol \F M(\theta_{i-1})  \sup_{\ba \in \F m(\theta_i) } \left| T(\ba; \C P) \right| \\
        &\ll  \left(\frac{P^m}{\det C }\right)^{\hspace{-1mm} -rd/m}P^{2(d-1)r\theta_{i-1}} \left(\frac{P^m}{\det C }\right)^{\hspace{-1mm} s}P^{-k\theta_i}\\
        &\ll  \left(\frac{P^m}{\det C }\right)^{\hspace{-1mm} s-rd/m} P^{-k\theta_i+2r(d-1)\theta_{i-1}}.
    \end{align*}
    By \eqref{delta} the exponent is
    \begin{align*}
        -k\theta_i+2r(d-1)\theta_{i-1} &= k(\theta_{i-1}-\theta_i)-(k-2r(d-1))\theta_{i-1}\\
        &< k(\theta_{i-1}-\theta_i)-(k-2r(d-1))\theta <-\delta
    \end{align*}
    and on summing the $O(1)$ contributions with $1 \le i \le M$ we recover the statement.
\end{proof}

We now have to homogenise the major arcs in order to find a common denominator for the major arcs approximations. However, for our present context it is enough to give only a crude bound here and let $q = \lcm_{\B j \in J} q_{\B j}$, so that trivially $q \ll P^{r(d-1)\theta}$ and $|\alpha_{\B j}q-b_{\B j}|  \ll 	P^{ -d+r(d-1)\theta}\hgam$.

For technical reasons, it is convenient to extend the major arcs slightly and define $\F M'(\theta)$ to be the set of all $\ba = \B a/q + \bb$ contained in the interval $[0,1)^{r}$ that satisfy
\begin{align} \label{major'}
    0 \le \B a < q \le c' P^{r(d-1)\theta } \qquad \hbox{and} \qquad      | \beta_{\B j} | \le c' P^{-d+r(d-1)\theta}\hgam \quad (\B j \in J)
\end{align}
for some suitably large constant $c'$. Henceforth all parameters $\ba, \B a, q, \bb$ will be implicitly understood to satisfy the major arcs inequalities as given in \eqref{major'}.
Let
\begin{align*}
    S_q(\B a) = \sum_{\ol x \mmod q} e \left( \frac {\F F(\ol x; \B a)}{q}  \right) 
\end{align*}
and
\begin{align*}
    v_{\C P}(\bb) = \int_{\ol{\bm{\xi}} \in \C P^s} e \left( \F F(\ol{\bm{\xi}};  \bb) \right) \D  \ol{\bm{\xi}}.
\end{align*}
We can now replace the exponential sum by an expression in terms of the approximation given by $\B a$, $q$ and $\bb$.

\begin{lem} \label{lem3:gen}
    Assume that $\ba  = \B a/q + \bb$, then one has
    \begin{align*}
          T(\ba; \C P)  - q^{-ms} S_q(\B a) v_{\C P}(\bb ) &\ll q^{ms}  + \left(\frac{P^m}{\det C }\right)^{\hspace{-1mm} s}  \left(\sum_{\B j \in J} |\beta_{\B j}|\frac{P^{d}}{\hgam}\right)\frac{q}{P_{\mathrm{min}}}.
    \end{align*}
\end{lem}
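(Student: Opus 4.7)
The strategy is the standard one for this stage of the circle method: substitute $\ol x = q\ol y + \ol z$ with $\ol z$ running over residues modulo $q$ and $\ol y$ ranging over a box, then replace the resulting lattice sum by an integral and extract both the complete exponential sum $S_q(\B a)$ and the integral $v_{\C P}(\bb)$.

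First I would decompose. Writing $\ba = \B a/q + \bb$ and using that $\F F$ is linear in $\ba$, one has $\F F(\ol x; \ba) = \F F(\ol x; \B a)/q + \F F(\ol x; \bb)$. Because each $\Phi_{\B j}$ is a symmetric multilinear form of degree $d$, expanding $\Phi_{\B j}(q\ol y + \ol z)$ via multilinearity produces $\Phi_{\B j}(\ol z)$ plus terms all divisible by $q$, so $e(\F F(q\ol y + \ol z; \B a)/q) = e(\F F(\ol z; \B a)/q)$. Hence
\begin{equation*}
    T(\ba; \C P) = \sum_{\ol z \mmod{q}} e\!\left(\frac{\F F(\ol z; \B a)}{q}\right) W(\ol z), \qquad W(\ol z) = \sum_{\substack{\ol y \in \mathbb Z^{ms} \\ q\ol y + \ol z \in \C P^s}} e\bigl(\F F(q\ol y + \ol z; \bb)\bigr).
\end{equation*}

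Next I would compare $W(\ol z)$ to the integral $q^{-ms} v_{\C P}(\bb)$. For each $\ol z$, the variable $\ol y$ ranges over a box of sidelength $\asymp P_i/q$ in the $i$-th block. A standard lattice-point-versus-integral estimate (essentially the mean value theorem applied to $\ol \xi \mapsto e(\F F(q\ol \xi + \ol z; \bb))$, integrated against a unit cube) shows
\begin{equation*}
    W(\ol z) = q^{-ms} \int_{\ol \xi \in \C P^s,\; \ol \xi \equiv \ol z \,(q)} e\bigl(\F F(\ol \xi; \bb)\bigr) \D \ol \xi + O\bigl(\text{vol}(\text{box}) \cdot q \sup_{\ol x \in \C P^s} |\nabla_{\ol x} \F F(\ol x; \bb)|\bigr) + O\bigl(\text{boundary}\bigr),
\end{equation*}
where the extra factor $q$ in the gradient term comes from the chain rule $\partial/\partial y_{i,n} = q \, \partial/\partial x_{i,n}$. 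Summing over $\ol z$, the integrals reassemble (up to a lower-order boundary correction) into $q^{-ms} S_q(\B a) v_{\C P}(\bb)$ as desired.

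It remains to estimate the two error types. For the gradient term, use that $\Phi_{\B j}$ is $d$-linear in the blocks $\B x_{j_1}, \dots, \B x_{j_d}$, so for $\ol x \in \C P^s$ and any coordinate index $(i,n)$ one has $|\partial \Phi_{\B j}/\partial x_{i,n}| \ll P_{j_1}\cdots P_{j_d}/P_{\mathrm{min}} = P^d/(\hgam P_{\mathrm{min}})$. Summing the volume-times-gradient contribution over the $q^{ms}$ residues $\ol z$ collapses $q^{ms} \cdot \mathrm{vol}(\text{box}) = \card \C P^s$, yielding precisely the second error term $\card \C P^s \bigl(\sum_{\B j} |\beta_{\B j}| P^d/\hgam\bigr) q/P_{\mathrm{min}}$. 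The boundary discrepancy, arising from lattice points near the edge of $\C P^s$ having their residue $\ol z$ lie in a thin shell, contributes at most $O(q^{ms})$, which is the first error term and dominates in the degenerate range where some $P_i/q$ is not large.

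The main obstacle is bookkeeping: tracking how the heterogeneous sizes $P_i = P/\gamma_i$ interact with the common modulus $q$, and in particular verifying that the gradient is dominated by division by $P_{\mathrm{min}}$ rather than a more delicate quantity. Once the gradient bound and the surface area of the inner box are computed cleanly, the final assembly is routine, and no issue arises from $q$ being comparable to or larger than $P_{\mathrm{min}}$ because the trivial $q^{ms}$ boundary bound then absorbs the defect.
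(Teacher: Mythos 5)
Your proposal is correct and follows essentially the same route as the paper: split $\ol x = q\ol w + \ol z$ into residue classes, compare the inner sum with the corresponding integral via the mean value theorem using the gradient bound $\ll \sum_{\B j}|\beta_{\B j}|P^d/(\hgam P_{\mathrm{min}})$ together with the factor $q$ from the substitution, and absorb the boundary/degenerate contribution into the trivial $O(q^{ms})$ term. The only cosmetic difference is that you apportion the integral to each residue class before reassembling, whereas the paper compares each class sum directly with $q^{-ms}\int_{\C P^s}$; the estimates are identical.
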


\begin{proof}
    We sort the variables $\ol x$ occurring in $T(\ba; \C P)$ in arithmetic progressions modulo $q$ and write $\ol x = q \ol w + \ol z$. Then 
    \begin{align*}
         T(\ba; \C P)  - q^{-ms} S_q(\B a) v_{\C P}(\bb) = \sum_{\ol z \mmod{q}} e(\F F(\ol z; \B a/q)) H(q, \bb,\ol z),
    \end{align*}
    where
    \begin{align*}
         H(q, \bb,\ol z) &= \dsum{\ol w \in \mathbb Z^{ms}}{ q \ol w + \ol z  \in \C P^s}   e(\F F(q \ol w + \ol z; \bb)) -   q^{-ms} \int_{\mathcal P^s} e ( \F F(\ol{\bm{\zeta}};  \bb)) \D  \ol{\bm{\zeta}}.
		\end{align*}
		Including the first term of $H(q, \bb,\ol z)$ in the integral, we may apply the Mean Value Theorem~and obtain
		\begin{align*}
         H(q, \bb,\ol z)&= \dsum{\ol w \in \mathbb Z^{ms}}{ q \ol w + \ol z  \in \C P^s} \int_{\ol w}^{\ol w+1} \left( e(\F F(q \ol w + \ol z; \bb))-e ( \F F(q \ol{\bm{\zeta}}+ \ol z;  \bb)) \right) \D  \ol{\bm{\zeta}}  + O(1)\\
         & \ll 1+\left(\frac{(Pq^{-1})^m}{\det C }\right)^{\hspace{-1mm} s} \left(\sum_{\B j \in J} |\beta_{\B j}|\frac{P^d}{\hgam}\right)\frac{q}{P_{\mathrm{min}}}.
    \end{align*}
    The remaining term is just $S_q(\B a)$ and can be bounded trivially by $q^{ms}$, so altogether we have
    \begin{align*}
         T(\ba; \C P)  - q^{-ms} S_q(\B a) v_{\C P}(\bb )   & \ll q^{ms} \left( 1+ \left( \frac{(Pq^{-1})^m}{\det C }\right)^{\hspace{-1mm} s} \left(\sum_{\B j \in J} |\beta_{\B j}|\frac{P^d}{\hgam}\right) \frac{q}{P_{\mathrm{min}}}\right)
    \end{align*}
    as claimed.
\end{proof}

We define the truncated singular series and singular integral as
\begin{align*}
    \F S_{\psi}(P) &= \sum_{q=1}^{c'P^{r(d-1)\theta}} q^{-ms} \dsum{\B a=0}{(\B a, q)=1}^{q-1} S_q(\B a) e \left(\frac{- \B a \cdot \B n}{q} \right)
\end{align*}
and
\begin{align*}
    \F J_{\psi}(\C P) &= \int_{|\beta_{\B j}| \le c' P^{-d+r(d-1)\theta}\hgam }v_{\C P}(\bb) e(- \bb \cdot \B n) \D \bb,
\end{align*}
respectively. This notation allows us to determine the overall error arising from substituting $q^{-ms}S_q(\B a) v(\bb)$ for $T(\ba; \C P)$ by integrating the expression from Lemma~\ref{lem3:gen} over $\F M'(\theta)$.

\begin{lem}\label{lem3:generror}
   The total major arcs contribution is given by
    \begin{equation*}
        \int_{\F M'(P,\theta)} T(\ba; \C P) e(- \ba \cdot \B n) \D  \ba = \F S_{\psi}(P) \F J_{\psi}(\C P) + O\hspace{-1mm}\left(\hspace{-1mm} \left(\frac{P^m}{\det C }\right)^{\hspace{-1mm} s-rd/m}P^{(d-1)r(2r+3)\theta -1+\eta}\hspace{-1mm}\right)\hspace{-1mm}.
    \end{equation*}
\end{lem}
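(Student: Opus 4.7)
The plan is to decompose $\F M'(\theta)$ as a union of disjoint boxes centred at coprime rationals $\B a/q$ with $q \le Q := c' P^{r(d-1)\theta}$, substitute the pointwise approximation of Lemma~\ref{lem3:gen} on each piece, and integrate term by term.

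First I verify that the $\bb$-boxes around distinct coprime rationals $\B a/q \ne \B a'/q'$ with $q, q' \le Q$ are pairwise disjoint. Indeed, $|\B a/q - \B a'/q'| \ge 1/(qq') \ge Q^{-2}$, whereas the radius of each $\bb$-box is at most $c'P^{-d+r(d-1)\theta}\hgam \le c'P^{-d+r(d-1)\theta + d\eta}$; within the useful range of $\theta$ (where the claimed error itself is $o(1)$) this is much smaller than $Q^{-2}$, so disjointness holds. Writing $\C B$ for the $\bb$-box, we thus have
\begin{equation*}
\int_{\F M'(\theta)}T(\ba;\C P)e(-\ba \cdot \B n)\D\ba = \sum_{q \le Q}\dsum{\B a \mmod q}{(\B a, q) = 1} e\!\left(-\frac{\B a \cdot \B n}{q}\right)\!\!\int_\C B T(\B a/q + \bb;\C P)e(-\bb\cdot\B n)\D\bb.
\end{equation*}
Substituting $T(\B a/q + \bb;\C P) = q^{-ms}S_q(\B a)v_{\C P}(\bb) + E$ from Lemma~\ref{lem3:gen}, the main contribution factors as $\F S_{\psi}(P)\cdot \F J_{\psi}(\C P)$, since the inner integral $\int_\C B v_{\C P}(\bb)e(-\bb\cdot\B n)\D\bb$ does not depend on the centre $\B a/q$.

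It remains to estimate the integrated error $E$. The contribution of the term $(P^m/\det C)^s \left(\sum_{\B j}|\beta_{\B j}|P^d/\hgam\right) q/P_{\mathrm{min}}$ is treated by noting that, for each fixed $\B j \in J$, $\int_\C B|\beta_{\B j}|\D\bb \ll P^{-d+r(d-1)\theta}\hgam \cdot \vol(\C B)$, so the product $|\beta_{\B j}|P^d/\hgam$ integrates to $O(P^{r(d-1)\theta}\vol(\C B))$ uniformly in $\B j$. Summing over the $r$ choices of $\B j$, over the at most $q^r$ coprime residues $\B a$, and over $q \le Q$ via $\sum_{q \le Q}q^{r+1} \ll Q^{r+2}$, and inserting $\vol(\C B) \ll P^{-rd+r^2(d-1)\theta}(\det C)^{rd/m}$ together with the identity \eqref{prod-hhgam} and $1/P_{\mathrm{min}} = P^{\eta-1}$, these contributions collapse precisely into $(P^m/\det C)^{s-rd/m}P^{(d-1)r(2r+3)\theta - 1 + \eta}$, matching the claim. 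The first error term $q^{ms}$, integrated analogously, produces $O(Q^{ms+r+1}P^{-rd+r^2(d-1)\theta}(\det C)^{rd/m})$; a direct comparison of exponents, using $\det C \le \gamma_{\mathrm{max}}^m = P^{m\eta}$, shows that throughout the useful range of $\theta$ this contribution is absorbed into the preceding bound.

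The main technical obstacle is tracking the $\gamma_i$-dependent factors in the correct positions: the saving $P^{\eta-1}$ emerges cleanly from $1/P_{\mathrm{min}}$, but the asymmetric appearance of $\hgam$ within each $\bb$-interval requires the identity \eqref{prod-hhgam} in order to consolidate the scattered powers of $\det C$ into the clean prefactor $(P^m/\det C)^{s-rd/m}$ of the error term.
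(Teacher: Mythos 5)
Your proposal is correct and follows essentially the same route as the paper: the main term is isolated exactly as $\F S_{\psi}(P)\F J_{\psi}(\C P)$ over the (disjoint) major-arc boxes, and the error is the pointwise bound of Lemma~\ref{lem3:gen} under the inequalities \eqref{major'} multiplied by the total arc volume $\ll (P^m/\det C)^{-rd/m}P^{(d-1)r(2r+1)\theta}$, combined via \eqref{prod-hhgam} and $P_{\mathrm{min}}^{-1}=P^{\eta-1}$. Your explicit checks of arc disjointness and of the $q^{ms}$ term being absorbed (both valid in the relevant small-$\theta$ range) are points the paper leaves implicit, but the argument is the same.
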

\begin{proof}
    We have 
		\begin{align*}
            \vol \F M'(\theta)       &\ll  \sum_{q=1}^{c'P^{r(d-1)\theta}} \left(q P^{-d+r(d-1)\theta }\right)^{r} \prod_{{\B j} \in J}\hgam   \ll \left( \frac{P^m}{\det C }\right)^{\hspace{-1mm} -rd/m} P^{(d-1)r(2r+1)\theta} ,
    \end{align*}
    where the last step follows by \eqref{prod-hhgam}. On the other hand, inserting the major arcs inequalities~\eqref{major'} for $q$ and $\bb$ in the bound from Lemma~\ref{lem3:gen} yields
		\begin{align*}
         |T(\ba; \C P) - q^{-ms} S_q(\B a) v_{\C P}(\bb )| \ll \left(\frac{P^m}{\det C }\right)^{\hspace{-1mm} s} P^{-1+2r(d-1)\theta + \eta}.
		\end{align*}
		This implies the statement.
\end{proof}

We may now fix $\theta$ such that 
\begin{align*}
    0 < \theta \le \frac{1 - \eta - \delta}{(d-1)r(2r+3)}
\end{align*}
holds for some small $\delta>0$. Under this condition, Lemmata \ref{lem3:pruning} and \ref{lem3:generror} can be combined to establish
\begin{align*}
    N_\psi(\C P) = \F S_{\psi}(P) \F J_{\psi}(\C P) + O\left(\left(\frac{P^m}{\det C }\right)^{\hspace{-1mm} s-rd/m}P^{-\delta}\right)\hspace{-0.5mm}.
\end{align*}

As in \cite{FRF} the truncated singular series and integral can be extended to infinity. As usual, the expected growth rate is encoded in the geometry of the problem and derives from normalising the singular integral.
In fact, standard computations reveal that
\begin{align}\label{singint}
    \F J_{\psi}(\C P)     &= \left(\frac{P^{m}}{\det C }\right)^{\hspace{-1mm} s-rd/m} \int_{|\bb| \le c'P^{r(d-1)\theta} } v_{1}(\bb) e(- \bb \cdot \tilde{ \B n})  \D  \bb,
\end{align}
where $\tilde n_{\B j} = P^{-d}\hgam n_{\B j}$ for every $\B j \in J$. 

As in \cite{FRF} the truncated singular series and integral can be extended to infinity. In particular, $v_{1}(\bb)$ and $S_q(\B a)$ are independent of $C$ and are therefore identical to the respective quantities considered in our former work \cite{FRF}. 
We present here a slight refinement of the treatment given in \cite{FRF}, leading to our improved bounds on the number of variables that is required in order to obtain an asymptotic formula. For the sake of reference, we quote here Lemma 3.5 of \cite{FRF}.

\begin{lem}\label{ramFRF}
  Suppose $C=\Id_m$, and $k>0$ and $0 < \theta \le d/(2(d-1))$ are parameters with $s-\dim \sing F > 2^{d-1}k$.
 Then one of the following is true.
  \begin{enumerate}[(A)]
      \item\label{A} $\ba \in \F m(Q,\theta)$, i.e. $T(\ba, Q) \ll Q^{ms-k\theta}$, or
      \item\label{B} $\ba \in \F M(Q,\theta)$, i.e. for every $\B j \in J$ one finds integers $0 \le a_{\B j} < q_{\B j} \ll Q^{(d-1)\theta}$ and real numbers $|\beta_{\B j}| \ll q_{\B j}^{-1}Q^{-d+(d-1)\theta}$ satisfying $\alpha_{\B j} = a_{\B j}/q_{\B j} + \beta_{\B j}$, and this representation is unique.
  \end{enumerate}
\end{lem}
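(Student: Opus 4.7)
The plan is to obtain the statement as a direct specialisation of Lemma~\ref{lem3:ram-final} to the case $C = \Id_m$, together with a small auxiliary step to establish the uniqueness of the rational approximation in part (B). Taking $\gamma_i = 1$ for every $i$ forces $\hgam = 1$ for all $\B j \in J$ and, via \eqref{eta-def}, $\eta = 0$. The admissible range $0 < \theta \le d/(2(d-1))$ then coincides with that appearing in Lemma~\ref{lem3:ram-final}, and the hypothesis $s - \dim \sing F > 2^{d-1}k$ is precisely what is required to exclude the singular alternative. Under this specialisation the estimates in cases (A) and (B) of Lemma~\ref{lem3:ram-final} collapse to $T(\ba, Q) \ll Q^{ms - k\theta}$ and $q_{\B j} \ll Q^{(d-1)\theta}$, $|\beta_{\B j}| \ll q_{\B j}^{-1}Q^{-d + (d-1)\theta}$, exactly as claimed.

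To deal with the uniqueness assertion in (B), I would assume $\alpha_{\B j}$ admits two coprime approximating pairs $(a_{\B j}, q_{\B j})$ and $(a'_{\B j}, q'_{\B j})$ both satisfying the stated inequalities, and then use the identity $a_{\B j}q'_{\B j} - a'_{\B j}q_{\B j} = -(\alpha_{\B j}q_{\B j} - a_{\B j})q'_{\B j} + (\alpha_{\B j}q'_{\B j} - a'_{\B j})q_{\B j}$ to deduce
\begin{equation*}
|a_{\B j} q'_{\B j} - a'_{\B j} q_{\B j}| \le |\alpha_{\B j} q_{\B j} - a_{\B j}| q'_{\B j} + |\alpha_{\B j} q'_{\B j} - a'_{\B j}| q_{\B j} \ll Q^{2(d-1)\theta - d}.
\end{equation*}
Since $\theta \le d/(2(d-1))$ the exponent is non-positive, and by insisting that the implicit constant in the definition of $\F M(Q, \theta)$ be chosen strictly smaller than the one produced by Lemma~\ref{lem3:ram-final} the integer on the left is forced to vanish. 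Coprimality of the two pairs then yields $(a_{\B j}, q_{\B j}) = (a'_{\B j}, q'_{\B j})$, as required.

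The genuinely substantive part of the result, namely the exclusion of the singular case (C) of Lemma~\ref{lem3:ram1}, is the anticipated main obstacle. It is, however, already carried out in the passage from Lemma~\ref{lem3:ram1} to Lemma~\ref{lem3:ram-final}. Its essence is a Birch-type dimension count: the lower bound furnished by Lemma~\ref{lem3:geom} on the number of joint zeros of the $(d-1)$-linear forms $B_1, \ldots, B_s$ in a box of side $Q^\theta$, if case (C) held, would force their common zero variety to have dimension at least $(d-1)s - 2^{d-1}k$; but this common zero set is cut out by the derivatives of $F$ and is therefore bounded above in terms of $\dim \sing F$, so the hypothesis $s - \dim \sing F > 2^{d-1}k$ yields a contradiction. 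Since neither the forms $B_n$ nor $\sing F$ depend on $C$, this step transfers verbatim from \cite[\S 3]{FRF}, and the proof is complete.
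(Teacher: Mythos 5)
Your route is genuinely different from the paper's, because the paper offers no proof of Lemma~\ref{ramFRF} at all: it is quoted verbatim as Lemma~3.5 of \cite{FRF}, purely for reference, before the discussion of the singular series and singular integral. You instead re-derive it inside the present paper by specialising Lemma~\ref{lem3:ram-final} to $C=\Id_m$. That is legitimate and non-circular: Lemma~\ref{lem3:ram-final} is established here from Lemmata~\ref{lem3:weyl1}--\ref{lem3:ram1}, with only the \emph{method} of \cite{FRF} (the dimension count excluding the singular alternative, which you summarise correctly) and not the statement of Lemma~\ref{ramFRF} being imported; and with $\gamma_i=1$ one has $\hgam=1$ and $\eta=0$, so the range $0<\theta\le d/(2(d-1))$ is contained in (for $d\ge 3$ strictly smaller than, not ``coinciding with'') the admissible range $0<\theta\le 1-\eta=1$. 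So your argument gives a self-contained justification where the paper settles for a citation.

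Two points need repair. First, case~(A) of Lemma~\ref{lem3:ram-final} yields $T(\ba,Q)\ll Q^{ms-k\theta+\eps}$, not $Q^{ms-k\theta}$; since the hypothesis $s-\dim\sing F>2^{d-1}k$ is a strict inequality you can apply that lemma with a slightly larger $k'>k$ and absorb the $\eps$, but this step should be made explicit. Second, the constant management in your uniqueness argument is backwards: if the implicit constant in the definition of $\F M(Q,\theta)$ were chosen \emph{smaller} than the one produced by Lemma~\ref{lem3:ram-final}, then alternative~(B) would no longer place $\ba$ in $\F M(Q,\theta)$ and the dichotomy would be lost. What the argument actually requires is that the product of the two approximation constants times $Q^{2(d-1)\theta-d}$ be less than $1$: for $\theta$ strictly below $d/(2(d-1))$ this holds automatically once $Q$ is large, while at the endpoint $\theta=d/(2(d-1))$ it forces the \emph{absolute} constants in the approximating inequalities to be small, which is how the statement is normalised in \cite{FRF}. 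With that correction (and noting that the pairs furnished by Lemma~\ref{lem3:ram-final} are already coprime, so $|a_{\B j}q'_{\B j}-a'_{\B j}q_{\B j}|\ge 1$ for distinct approximations), your proof goes through.
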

This allows us to understand the contribution from the singular series and the singular integral.

\begin{lem}\label{conv}
    The function $S_{q}(\B a)$ is pseudo-multiplicative, i.e. 
    \begin{align}\label{mult}
	S_{q}(\B a)S_{q'}(\B a') = S_{qq'}(q'\B a + q \B a')
    \end{align}
    whenever $(q,q')=1$. Furthermore, when $k>(d-1)(r+1+\delta)$ one has
    \begin{align}\label{ss-conv}
	q^{-ms}S_{q}(\B a) \ll q^{-(r+1+\delta)}
    \end{align}
    for every $\B a \mmod{q}$ with $(q,\B a) =1$, and 
    \begin{align}\label{si-conv}
	v_1(\bg) \ll |\bg|^{-(r+1+\delta)}.
    \end{align}
\end{lem}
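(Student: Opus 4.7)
My plan is to establish the three claims in turn, via a combination of the Chinese Remainder Theorem and the Weyl machinery from the previous sections.

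The pseudo-multiplicativity \eqref{mult} is an immediate consequence of the Chinese Remainder Theorem. Since $\F F(\ol x; \B a)$ has integer coefficients and is linear in $\B a$, the expression $e(\F F(\ol x; \B a/q))$ depends only on $\ol x \bmod q$, and similarly for the primed data. Combining the identity $(q' \B a + q \B a')/(q q') = \B a/q + \B a'/q'$ with the $\B a$-linearity of $\F F$, and parametrising $\ol x \bmod q q'$ via the CRT bijection with pairs $(\ol y \bmod q, \ol z \bmod q')$, the sum $S_{qq'}(q' \B a + q \B a')$ factorises as $S_q(\B a) S_{q'}(\B a')$.

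For \eqref{ss-conv}, I would identify $S_q(\B a)$ with $T(\B a/q; [0, q-1]^s)$ in the setting $C = \Id$, so that Lemma~\ref{ramFRF} applies with $P = q$. Choose $\theta$ slightly below $1/(d-1)$, which lies in the admissible range $(0, d/(2(d-1))]$ since $1/(d-1) \le d/(2(d-1))$ for $d \ge 2$. The coprimality $(\B a, q) = 1$ forces at least one coordinate $\alpha_{\B j}$ to have reduced denominator exactly $q$; any rational $a_{\B j}/q_{\B j}$ with $q_{\B j} < q$ then differs from $\alpha_{\B j}$ by at least $(q q_{\B j})^{-1}$, whereas case~(B) of Lemma~\ref{ramFRF} would demand a distance of only $\ll q_{\B j}^{-1} q^{-d + (d-1)\theta}$. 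A direct comparison yields $q^{(d-1)(1 - \theta)} \ll 1$, which fails for $\theta < 1$ once $q$ exceeds a constant, so case~(B) is excluded and case~(A) gives $|S_q(\B a)| \ll q^{ms - k\theta}$. Since $k > (d-1)(r+1+\delta)$, one may take $\theta$ with $k\theta > r + 1 + \delta$, yielding \eqref{ss-conv}; the remaining bounded values of $q$ are absorbed via the trivial bound $|S_q(\B a)| \le q^{ms}$.

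The singular integral bound \eqref{si-conv} admits a parallel treatment in the continuous setting. Using the bi-homogeneity of $\F F$ (degree $d$ in $\ol x$ and degree $1$ in $\ba$), the rescaling $\bm{\xi} \mapsto |\bg|^{-1/d}\bm{\xi}$ writes $v_1(\bg)$ as $|\bg|^{-ms/d}$ times an integral of $e(\F F(\bm{\eta}; \bg/|\bg|))$ over a box of side $|\bg|^{1/d}$. A continuous analog of Lemmata~\ref{lem3:weyl1}--\ref{lem3:ram-final}, obtained by replacing counting measure with Lebesgue measure and the pigeonhole step of Lemma~\ref{lem3:pre-geom} with a direct volume comparison, produces the parallel dichotomy with $P = |\bg|^{1/d}$ and effective coefficient vector $\bg/|\bg|$ of unit norm. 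Since $\bg/|\bg|$ is bounded away from zero in at least one coordinate, the approximation branch can be ruled out for $|\bg|$ large; the resulting minor-arcs estimate combined with the prefactor $|\bg|^{-ms/d}$ delivers \eqref{si-conv} once $k\theta > r+1+\delta$ is arranged as before.

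The principal technical obstacle will be \eqref{si-conv}, where the continuous formulation of the differencing and geometry-of-numbers arguments needs to be carried out carefully in the Lebesgue setting. By contrast, the arithmetic exclusion in \eqref{ss-conv} and the CRT argument in \eqref{mult} are essentially routine.
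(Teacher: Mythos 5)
Your CRT argument for \eqref{mult} is fine and agrees with the paper, which disposes of it as ``standard''. But your deduction of \eqref{ss-conv} contains a genuine error: it is not true that $(\B a,q)=1$ forces some $\alpha_{\B j}=a_{\B j}/q$ to have reduced denominator exactly $q$; for instance $q=6$, $\B a=(2,3)$ satisfies $(\B a,q)=1$ while the reduced denominators are $3$ and $2$. For general $q$ the exact-approximation argument only gives $a_{\B j}/q=a'_{\B j}/q_{\B j}$ for every $\B j$, hence $q=\lcm_{\B j}q_{\B j}\ll q^{r(d-1)\theta}$, and excluding case (B) of Lemma~\ref{ramFRF} then forces $\theta<1/(r(d-1))$, which under the hypothesis $k>(d-1)(r+1+\delta)$ no longer yields $k\theta>r+1+\delta$. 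The repair is to run your argument only for prime powers $q=p^l$ (where your claim is correct, since some $a_{\B j}$ is prime to $p$) and assemble general $q$ via the multiplicativity \eqref{mult} you have already established, absorbing the resulting $O(1)^{\omega(q)}\ll q^{\eps}$ into $\delta$. (The paper itself simply quotes Lemma~6.3 of \cite{FRF} for \eqref{ss-conv}, remarking that $d=2$ needs a small modification.)

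For \eqref{si-conv} your route is both much heavier than the paper's and, as sketched, quantitatively insufficient. Rescaling to a box of side $P=|\bg|^{1/d}$ caps the attainable saving: even granting a complete continuous analogue of the Weyl and shrinking lemmas, the minor-arc branch gives $v_1(\bg)\ll|\bg|^{-ms/d}P^{ms-k\theta}=|\bg|^{-k\theta/d}$, and with $\theta\le d/(2(d-1))$ (or at best $\theta\le 1$, the natural limit of the shrinking step) this is at most $|\bg|^{-k/(2(d-1))}$, resp.\ $|\bg|^{-k/d}$, which under $k>(d-1)(r+1+\delta)$ need not be $\ll|\bg|^{-(r+1+\delta)}$. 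Moreover the exclusion of the approximation branch ``since $\bg/|\bg|$ is bounded away from zero in some coordinate'' is not justified as stated: if that branch involves $\|\cdot\|$, a coordinate equal to $\pm1$ (e.g.\ $\bg=(|\bg|,0,\dots,0)$) satisfies the approximation trivially, so you would have to set the continuous dichotomy up with absolute values rather than distances to integers --- machinery you assert but do not construct. The paper avoids all of this with the existing lemmas: choose $Q$ with $|\bg|=cQ^{(d-1)\theta}$, so that $Q^{-d}\bg$ lies on the boundary of the major arc with $q=1$, $\B a=\B 0$; Lemma~\ref{lem3:gen} (with $S_1(\B 0)=1$) gives $v_1(\bg)\ll Q^{-ms}T(Q^{-d}\bg;Q)+Q^{-1}|\bg|$, and the minor-arc bound $T(Q^{-d}\bg;Q)\ll Q^{ms-k\theta}$ then delivers the full saving $Q^{-k\theta}=|\bg|^{-k/(d-1)}$ independently of $\theta$, for which the hypothesis $k>(d-1)(r+1+\delta)$ exactly suffices. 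You should adopt this argument rather than develop a continuous Weyl theory.
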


\begin{proof}
    The multiplicativity follows by standard arguments, and the second statement is \cite[Lemma 6.3]{FRF} with $W=r+1+\delta$. Observe that in \cite{FRF}, only $d \ge 3$ is permitted, but a slight modification of the argument shows that the statement remains valid for $d=2$ as well.
    
    In the case of $v_1(\bg)$ the argument is similar. Let $\theta$ be given as above and choose $Q$ in such a way that $|\bg|=cQ^{(d-1)\theta}$. As before, the major arcs are disjoint, and the pont $P^{-d}\bg$ lies right on the edge with the approximation $\B a = \B 0$ and $q=1$. It therefore follows from Lemma \ref{lem3:generror} that
    \begin{align*}
	v_1(\bg)  \ll Q^{-ms} T(Q^{-d}\bg,Q) + Q^{-1} |\bg|.
    \end{align*}
    As before, by continuity the the minor arcs bound continues to apply on the boundaries of $\F m(Q,\theta)$ and yields a bound $T(Q^{-d}\bg, Q) \ll Q^{ms-k\theta}$. Altogether we thus obtain
    \begin{align*}
	v_1(\bg)  \ll Q^{-k\theta} + Q^{-1+(d-1)\theta} \ll |\bg|^{-(r+1+\delta)}.
    \end{align*}
    This completes the proof of the lemma.
\end{proof}

Now define the complete singular series $\F S_{\psi}$ and the singular integral $\F J_{\psi}$ as
\begin{align*}
     \F S_{\psi} &=\sum_{q=1}^{\infty} \dsum{\B a=0}{(\B a, q)=1}^{q-1} q^{-ms}S_q(\B a) e\left(\frac{- \B a \cdot \B n}{q}\right)
\end{align*}
and
\begin{align*}
     \F J_{\psi} &= \int_{\mathbb R^{r}} v_{\C P}(\bb) e(- \bb \cdot \B n)  \D  \bb.
\end{align*}
Then we have shown that 
\begin{align*}
		N_{\psi}(\C P) = \F J_{\psi}\F S_{\psi} + O\left(\left(\frac{P^{m}}{\det C }\right)^{\hspace{-1mm} s-rd/m} P^{-\delta}\right)\hspace{-.5mm},
\end{align*}
where the bound on the number of variables is obtained from combining \eqref{s>k} and \eqref{k-pruning} as well as the bound on $k$ given in Lemma \ref{conv} and is given by
\begin{align*}
    s - \dim \sing F > 2^{d-1} \max \left\lbrace 2(d-1), rd\frac{\sum_{i=1}^m \log P_i}{m \log P_{\mathrm{min}}}\right\rbrace \hspace{-.5mm},
\end{align*}
which is the bound claimed in Theorem~\ref{rep}.

It follows from \eqref{mult} that the singular series can be expanded as an Euler product 
$$\F S_{\psi}=\prod_p \chi_p(F; \psi)$$ 
whose factors are given by 
\begin{align*}
    \chi_p(F; \psi) &= \sum_{l=0}^{\infty}p^{-lms} \sum_{\substack{\B a =0 \\ (\B a, p)=1}}^{p^l-1} S_{p^l}(\B a) e\left(\frac{- \B a \cdot \B n}{p^l}\right)\hspace{-.5mm}.
\end{align*}
In particular, \eqref{ss-conv} implies that $\chi_p(F; \psi) = 1 + O(p^{-1-\delta})$, whence we may conclude that the Euler product converges and thus vanishes if and only if one of the factors is zero. 
Standard arguments now show that 
\begin{align*}
		\chi_p(F; \psi)  &= \lim_{l \to \infty} (p^l)^{r-ms} \card \{\ol x \mmod{p^l}: \Phi_{\B j}(\ol x) \equiv n_{\B j} \pmod{p^l} \quad  \forall \; \B j \in J\} \\
		& =\lim_{l \to \infty} (p^l)^{r-ms} \card \{\ol x \mmod{p^l}:  \\
		&\qquad \qquad \qquad \qquad \qquad F(\B x_1 t_1 + \dots + \B x_m t_m) \equiv \psi(t_1, \dots, t_m) \pmod{p^l}\}
\end{align*}
identically in $t_1, \dots, t_m$, where we used the correspondence between identical representations of forms and points on hypersurfaces as in \S \ref{p:setup}. Thus $\chi_p(F; \psi)$ measures the density of solutions to \eqref{rep} in $\mathbb Q_p$, and it follows from \cite[Theorem 5.1]{Hensel} in combination with \cite[Theorem 4.1]{tdw-local} that this density is positive for all $p$ whenever 
\begin{align*}
		ms-\dim \sing_m F \ge (d^2+m)^{2^{d-2}} d^{2^{d-1}},
\end{align*}
where 
\begin{align*}
		\sing_m F = \left\{ \ol x : \rank \left( \frac{\partial \Phi_{\B j} (\ol x)}{\partial \B x_i}  \right)_{\B j \in J, 1 \le i \le m} \le r-1 \right\} \hspace{-.5mm}, 
\end{align*}
and one probably has $\dim \sing_m F = (m-1)s + \dim \sing F$.\\

Similarly, standard computations show that the singular integral is given by
\begin{align*}
    \F J_{\psi} =\left(\frac{P^{m}}{\det C }\right)^{\hspace{-1mm} s-rd/m} \chi_{\infty}(\psi; P_1,\dots,P_m),
\end{align*}
where the integral
\begin{align}\label{chi_inf}
		\chi_{\infty}(\psi; P_1,\dots,P_m) = \int_{\mathbb R^{r}} \int_{|\ol{\bm{\xi}}| \le 1} e \left( \F F(\ol{\bm{\xi}};  \bb) - \bb \cdot \tilde{\B n}\right) \D  \ol{\bm{\xi}}  \D  \bb
\end{align}
measures solutions of a rescaled version of the problem in the real unit box. In fact, a straightforward generalisation of \S 11 and Lemma 2 of \cite{schmidtquad} shows that 
$$\chi_{\infty}(\psi; P_1,\dots,P_m) \gg 1$$ whenever 
$s-\dim \sing F> 2^{d}r(d-1)$
and the variety
\begin{align}\label{real-var}
		M = \{\ol x \in  (-1,1)^{ms}: \Phi_{\B j}(\ol x) = \tilde n_{\B j} \quad (\B j \in J) \} \subset \mathbb R^{ms}
\end{align}
is of dimension at least $ms-r$. As above, this variety may be rewritten in terms of $F$ and $\psi$ themselves. In fact, one has
\begin{align*}
		\sum_{\B j \in J} \tilde n_{\B j} t_{j_1} \cdot \ldots \cdot t_{j_d} &= \sum_{\B j \in J} P^{-d} n_{\B j} \gamma_{j_1} t_{j_1} \cdot \ldots \cdot \gamma_{j_d}t_{j_d}\\
		&= \sum_{\B j \in J} (t_{j_1}/P_{j_1}) \cdot \ldots \cdot (t_{j_d}/P_{j_d}) = \psi(\textstyle{\frac{t_1}{P_1}, \dots, \frac{t_m}{P_m}}),
\end{align*} 
whence $M$ is given by
\begin{align*}
		M = \{\ol x \in  (-1,1)^{ms}: F(\B x_1 t_1 + \dots + \B x_m t_m)= \psi(\textstyle{\frac{t_1}{P_1}, \dots, \frac{t_m}{P_m}}) \quad \forall \;t_1, \dots, t_m \},
\end{align*}
and the requirement is that this variety have a positive $(ms-r)$-dimensional volume. This condition will be satisfied as soon as $M$ contains at least one non-singular point. 

Whether or not $M$ contains any points depends on the choice of the parameters $P_i$ as well as the shape of $\psi$ itself. In the case of Theorem \ref{thm:gen} we pick $P_i = n_i^{1/d}$, so 
$$\psi(\textstyle{\frac{t_1}{P_1}, \dots, \frac{t_m}{P_m}}) = \widetilde \psi(t_1, \dots, t_m)$$
where $\widetilde \psi$ is as in \eqref{psi-til}
With this choice, we have $\tilde n_i = 1$ for $1 \le i \le m$, and $\tilde n_{\B j} \le 1$ for all $\B j \in J$ if $\psi$ is pseudo-diagonal. Observe that $F$ maps the unit cube to an area of size $O(1)$, so $M$ will not contain any points unless all coefficients of $\tilde \psi$ are at most of size $1$. In other words, if $\psi$ is not pseudo-diagonal up to a factor at most as large as the largest coefficient of $F$, the variety $M$ will invariably be empty.

\end{document}